\newtheorem{theorem}{Theorem}[section]
\newtheorem{proposition}[theorem]{Proposition}
\newtheorem{corollary}[theorem]{Corollary}
\newtheorem{lemma}[theorem]{Lemma}
\newtheorem{remark}[theorem]{Remark}
\def\sT{{\mathsf{T}}}
\def\sA{{\mathsf{A}}}
\def\sS{{\mathsf{S}}}
\def\bd{{\mathbf{d}}}
\def\CV{{\mathcal{V}}}
\def\CO{{\mathcal{O}}}
\def\CM{{\mathcal{M}}}
\def\bla{{\boldsymbol{\lambda}}}
\def\bmu{{\boldsymbol{\mu}}}
\def\sheafhom{\mathop{\mathcal{H}\! \mathit{om}}}
\def\sheafend{\mathop{\mathcal{E}\! \mathit{nd}}}
\def\sslash{/\!/}
\def\gg{>\!\!>}
\newcommand{\hide}[1]{}
\newcommand{\todo}[1]{}
\newcommand{\nocontentsline}[3]{}
\let\origcontentsline\addcontentsline
\newcommand\stoptoc{\let\addcontentsline\nocontentsline}
\newcommand\resumetoc{\let\addcontentsline\origcontentsline}
\title{Relative Quasimaps and Tilting Module of $U(\mathfrak{gl}_n)$}
\author{Che Shen}
\date{}
\begin{document}
\setlength{\parindent}{0em}
\setlength{\parskip}{1em}
\maketitle

\begin{abstract}
We study the equivariant cohomology of the moduli space of quasimaps from $\mathbb{P}^1$ with one marked point to the flag variety. 
This moduli space has an open subset isomorphic to the Laumon space. The equivariant cohomology of the Laumon space carries a natural action of $U(\mathfrak{gl}_n)$ constructed via geometric correspondences. We extend this construction to the entire quasimap moduli space and relate it to tilting modules of $U(\mathfrak{gl}_n)$.
\end{abstract}

\tableofcontents

\section{Introduction}
\stoptoc
\subsection{Quasimaps}
Given a GIT quotient $W \sslash G$, where $W$ is an affine variety and $G$ is a reductive group\footnote{In this paper, all algebraic varieties and stacks are defined over $\mathbb{C}$.},
Quasimaps \cite{ciocan2014stable} from a curve $C$ to $W \sslash G$ are defined to be maps from $C$ to the stack
$[W/G]$ that generically land in the semi-stable locus $W^{ss}$. 
In this paper, we consider the case where 
\begin{equation}\label{eqn:W_mod_G}
    \begin{split}
        W &= \bigoplus _{i=1}^{n-1} \text{Hom}(\mathbb{C}^{i}, \mathbb{C}^{i+1}) \\
    G &= \prod _{i=1}^{n-1}GL(i)
    \end{split}
\end{equation}
for a fixed integer $n \geq 2$. $G$ acts on $W$ by conjugation and $W \sslash G$
is isomorphic to the flag variety of $\mathbb{C}^n$.  

\subsection{Nonsingular quasimaps, or Laumon spaces}\label{subsec:laumon}
A special case of the moduli space of quasimaps that has been studied extensively in the literature is the 
Laumon space, which parametrizes flags of locally free sheaves on $\mathbb{P}^1$:
\begin{equation} 
    \label{eqn:flagofsheaves}
    \CV_1 \subset \dots \subset \CV _{n-1} \subset \CO _{\mathbb{P}^1}^{\oplus n}
\end{equation}
such that $\CV_i$ has rank $i$ and 
the fibers at $\infty \in \mathbb{P}^1$ match a fixed full flag of subspaces of $\mathbb{C}^n$.
This can be viewed as quasimaps from $\mathbb{P}^1$ to the flag variety of $\mathbb{C}^n$ that are {nonsingular} at $\infty$ with a fixed evaluation map. 
\footnote{``Nonsingular'' here
means the evaluation map at $\infty $ lands in $W ^{ss}$. Not to be confused with
singular/smoothness of an algebraic variety.}
We denote it by $QM_{ns}$. See Section \ref{subsec:defs} for the naming conventions about quasimaps in this paper.

Fix a basis $e_1,...,e_n$ of $\mathbb{C}^n$.
The torus $\sT = (\mathbb{C}^*)^{n+1}$ acts on $QM_{ns} $ by scaling the $\mathbb{C}^n$ as well as the domain $\mathbb{P}^1$.
So we can consider the equivariant cohomology $H^*_{\sT}(QM_{ns})$ (resp. equivariant $K$-theory $K_\sT(QM_{ns})$). It was shown in 
\cite{feigin2011gelfand,braverman2005finite} (see also \cite{nakajima2011handsaw}) that one can construct an action of the universal enveloping algebra $U(\mathfrak{gl}_n)$
(resp. the quantum group $U_q(\mathfrak{gl}_n)$) via geometric correspondences. 
Furthermore,
$H^*_{\sT}(QM_{ns})$ can be identified with the ``universal dual Verma module'' of $U(\mathfrak{gl}_n)$. More precisely, $H^*_{\sT}(QM_{ns})$ is a module over
\[ 
    H^*_{\sT}(pt) = \mathbb{C}[a_1,...,a_n, \epsilon ].
\]
Let $\lambda =(\lambda _1,...,\lambda _n)$ and $\epsilon _0$ be any complex numbers. We can specialize parameters by the map
\[ 
    H^*_{\sT}(pt) \to \mathbb{C}_\lambda 
\]
given by 
\begin{equation}
    \label{spec_wt}
    \begin{split}
        a_i \mapsto  \lambda _i &\epsilon ,\; i=1,...,n \\
        \epsilon &\mapsto \epsilon _0
    \end{split}
\end{equation}
Then
\[ 
    H^*_{\sT}(QM_{ns}) \otimes _{H^*_{\sT}(pt)}\mathbb{C}_\lambda 
\]
can be identified with the dual Verma module of $U(\mathfrak{gl}_n)$ with lowest weight $\lambda - \rho $, where 
\[ 
    \rho = (-1, -2, ..., -n)
\]
is half sum of positive roots up to overall shift. (In this paper, we consider lowest weight modules
rather than highest weight modules. So some conventions differ from the usual ones by a sign.)
A similar result can be proved for $K_\sT(QM_{ns})$, see \cite{shen2024affine}.

\subsection{Relative quasimaps}
The problem of compactifying moduli spaces of maps 
has been a central theme in enumerative geometry. 
The construction of \cite{ciocan2014stable} provides a natural compactification of
the Laumon spaces by allowing the domain $\mathbb{P}^1$ to bubble up into a 
chain of $\mathbb{P}^1$'s. 
The precise definition will be 
discussed in Section \ref{subsec:defs}. 
We call this the moduli space of quasimaps with relative condition at $\infty $, as in \cite{okounkov2015lectures}, and denote it by $QM_{rel}$. 


We will establish some basic properties of $QM_{rel}$ and describe the $\sT$-fixed locus and 
the Bialynicki-Birula decomposition of a subset of it.
These techniques were widely used in studying smooth algebraic varieties with a torus action, but certain cautions are required when dealing with Deligne-Mumford stacks, as is the case for $QM_{rel}$. Similar analysis were also carried out
for the moduli space of stable maps in \cite{oprea2006tautological} and the recent work \cite{knutson2025stable}. 
\todo{
It's interesting 
to compare our results to theirs, see Remark.}



\subsection{}

The goal of this paper is to do a similar construction as in Section \ref{subsec:laumon}
for $H^*_{\sT}(QM_{rel})$.  
It turns out that a $U(\mathfrak{gl}_n)$ action can be constructed in essentially the same way.
The geometry of $QM_{rel}$ suggests that $H^*_{\sT}(QM_{rel})$ should be closely
related to the tilting modules of $U(\mathfrak{gl}_n)$. 
(We use the definition of \cite{humphreys2021representations}: 
A tilting module is a module in category $\mathcal{O}$ that has both Verma and dual Verma filtrations.) To see this, note that 
For a relative quasimap, if we fix the map on the ``bubbles'' but let the map on the 
parametrized $\mathbb{P}^1$ vary, we get a space that is isomorphic to $QM_{ns}$. Exploiting this, we get a filtration on $QM_{rel}$ such
that each filtered piece looks like $QM_{ns}$. (In practice, this filtration is 
constructed for a substack of $QM_{rel}$, see below.) Taking cohomology, this leads to a filtration by dual Verma modules. The properness of $QM_{rel}$ implies that the module we get is self-dual.

Two findings are worth mentioning: First, the modules we get are not tilting modules in the usual sense because 
they are not in category $\mathcal{O}$ -- The Cartan elements act non-semisimply. But they live in a dual category $\mathcal{O}'$ studied in \cite{soergel1986equivalences},
which is known to be equivalent to category $\mathcal{O}$. And these modules are the images of tilting modules under this equivalence.

Second, it turns out that it's more natural to consider a submodule rather than the whole $H^*_{\sT}(QM_{rel})$. Recall that 
to specialize to a certain highest weight $\lambda =(\lambda _1,...,\lambda _n)$, we need to consider  
\[ 
    H^*_{\sT}(QM_{rel}) \otimes _{H^*_{\sT}(pt)}\mathbb{C}_\lambda 
\]
as in \eqref{spec_wt}. This can be further identified with (non-equivariant) cohomology of the fixed locus
\[ 
    H^*\left(QM_{rel}^{\mathbb{C}^*_\lambda }\right)
\]
where $\mathbb{C}^*_\lambda \subset \sT $ is the subtorus determined by $\lambda $. By examining the
connected components of the fixed locus, we see that this module decomposes into a direct sum. We study the direct summand coming from the component that intersects $QM_{ns}$. We can determine its graded dimension and thus determine how it decomposes into indecomposible tilting modules. 
However, it is still an open question to geometrically characterize an 
indecomposible tilting module in this setting.

\subsection{}
Our approach also 
applies to $K_\sT(QM_{rel})$, but some subtleties needs to be taken into account, see Remark \ref{rmk:uq_action}.
It should also be mentioned that \cite{feigin2011yangians}, \cite{tsymbaliuk2010quantum}, \cite{neguct2018affine}
constructed a larger algebra -- the Yangian $Y(\mathfrak{gl}_n)$ or the quantum affine algebra $U_q(\widehat{\mathfrak{gl}}_n)$ --
acting on $H^*_{\sT}(QM_{ns})$ or $K_\sT(QM_{ns})$. Our results can also be generalized in 
this direction, see Remark \ref{rmk:bigger_alg}.
However, we will focus on the $U(\mathfrak{gl}_n)$ action in this paper.

\subsection{Outline of the paper}
We introduce the notations and study the geometry 
of $QM_{rel}$ in detail in Section \ref{sec:quasimap} and establish several technical results to be used later. 
The construction of $U(\mathfrak{gl}_n)$ action on 
$H^*_{\sT}(QM_{ns})$ is recalled in Section \ref{sec:action}, where we also extend this action to $H^*_{\sT}(QM_{rel})$. In Section \ref{sec:spec_lowest_wt},
we specialize the equivariant parameters and single out a direct summand $H_{\lambda , w}$ by analyzing the fixed locus. 
In Section \ref{sec:Hpw}, we show that $H_{\lambda,w}$ 
is a tilting module in category $\mathcal{O}'$ 
and compute its graded dimensions.

\resumetoc

\subsection*{Acknowledgement}
I would like to thank Andrei Okounkov for suggesting this project and for his guidance throughout its development.
I am grateful to Tommaso Botta, Yixuan Li, Melissa Liu, Peng Shan, Andrey Smirnov for many helpful and enlightening discussions. I also want to thank Younghan Bae, Dragos Oprea and Wolfgang Soergel for answering my questions and pointing me to valuable references.

\section{The Moduli Space of Quasimaps}\label{sec:quasimap}

\subsection{Definitions and conventions}\label{subsec:defs}
To begin with, we recall the definition of quasimaps. Concretely,
a quasimap from a curve $C$ to a GIT quotient $W \sslash G$ is 
a principal $G$ bundle $\mathcal{G}$ together with a section of the 
associated $W$ bundle
\[ 
    f \in \Gamma(C, \mathcal{G}\times _{G}W)
\]
with certain stability conditions. 


In this paper, we will study two variants of the moduli space of quasimaps.
We refer the readers to \cite{okounkov2015lectures} for the general 
definitions, but spell out the definitions when the target is the flag variety. Note that the naming conventions here differ from the ones used in \cite{ciocan2014stable}. What we call $QM_{rel}$ here should be called quasimap from a parametrized $\mathbb{P}^1$ with one marked point at $\infty $ to the flag variety in \cite{ciocan2014stable} (see Section 7.2 therein), while $QM_{ns}$ is 
the open subset in it where the domain curve does not degenerate.

Fix an integer $n \geq 2$.
Let $\text{Flag}(\mathbb{C}^{n})$ be the full flag variety of $\mathbb{C}^{n}$ and 
let $x$ be a point in it.
We use $QM_{ns, x}$ to denote the moduli space of quasimaps to the flag variety (written as a GIT quotient $W \sslash G$ as in \eqref{eqn:W_mod_G}) with nonsingular condition at $\infty $ and $ev(\infty )=x$. 
Unwinding the definition, this is the data of flags of sheaves
\[ 
    \CV_1 \subset \dots \subset \CV _{n-1} \subset \CO _{\mathbb{P}^1}^{\oplus n}
\]
such that $\mathcal{V}_i$ has rank $i$ and the restriction to $\infty $
\[ 
    \CV_1|_\infty  \subset \dots \subset \CV _{n-1}|_\infty  \subset \mathbb{C}^{n}
\]
corresponds to the point $x$ in $\text{Flag}(\mathbb{C}^{n})$. 

Let $QM_{rel,x}$ denotes quasimaps to $\text{Flag}(\mathbb{C}^{n})$ with \textit{relative} condition at $\infty $ and $ev(\infty )=x$. A quasimap in $QM_{rel,x}$ is the data of 
\begin{center}
\begin{tikzcd}
C \arrow[r, "{(\mathcal{G}, f)}"] \arrow[d, "\varphi"] & {[W/G]} \\
\mathbb{P}^1                                      &        
\end{tikzcd}
\end{center}
\begin{itemize}
    \item A domain curve $C$ which is either a $\mathbb{P}^1$ or a nodal curve consisting of a chain of $\mathbb{P}^1$'s, together with a marked point (denoted $\infty $) on the rightmost $\mathbb{P}^1$.

    \item A map $\varphi: C \to \mathbb{P}^1$ that restricts to an isomorphism on the leftmost $\mathbb{P}^1$, while the complement in $C$ maps to a point. We will refer to the leftmost $\mathbb{P}^1$ as the parametrized $\mathbb{P}^1$ and other $\mathbb{P}^1$'s as bubbles.
    
    \item A principal $G$ bundle $\mathcal{G}$ together with a section
    \[ 
        f \in \Gamma(C, \mathcal{G}\times _{G}W)
    \]
    such that $f(\infty )=x$ and $f$
    satisfies the stability condition that 
    \begin{itemize}
        \item all the nodes and the marked point $\infty $ maps to $W\sslash G$ under $f$. (This implies that there could be only finitely many points that maps to the unstable locus in $W$.)
        \item For any $\mathbb{P}^1$ in the bubble, $f$ is not a constant map when restricted to that $\mathbb{P}^1$.
    \end{itemize}

\end{itemize} 

Two quasimaps $(C_1, \mathcal{G}_1, f_1)$ and $(C_2, \mathcal{G}_2, f_2)$ are isomorphic if there is an isomorphism $g: C_1 \xrightarrow{\sim}C_2$ such that $g(\infty )=\infty $ and the diagram 
\begin{center}
\begin{tikzcd}
     & C_1 \arrow[ld, "\varphi_1"'] \arrow[rd, "{(\mathcal{G}_1, f_1)}"] \arrow[dd, "g"] &         \\
\mathbb{P}^1 &                                                               & {[W/G]} \\
     & C_2 \arrow[lu, "\varphi_2"] \arrow[ru, "{(\mathcal{G}_2, f_2)}"']                 &        
\end{tikzcd}
\end{center}
commutes. This means that if two quasimaps differ by an automorphism of a $\mathbb{P}^1$ in the bubbles, they are considered
isomorphic. But automorphism of the parametrized $\mathbb{P}^1$ can give a different quasimap. In fact, we will consider the $\mathbb{C}^*$ action on $QM_{rel}$ and $QM_{ns}$ induced by the $\mathbb{C}^*$ action on the parametrized $\mathbb{P}^1$.

As we have seen in the case of $QM_{ns}$, the data of $\mathcal{G}$ and $f$ is equivalent to vector bundles $\mathcal{V}_i, i=1,...,n-1$ on $C$ together with morphisms $\mathcal{V}_{i-1}\to \mathcal{V}_i$.
The degree of a quasimap is defined to be the tuple
\[ 
    \bd =(d_1,...,d_{n-1})
\]
where 
\[ 
    d_i = \deg \mathcal{V}_i,\; i=1,...,n-1
\]
If $f$ is an actual map to $\text{Flag}(\mathbb{C}^n)$, then this is equivalent to defining the degree using the homology class of the image of $f$.
The connected components in $QM_{ns}$ and $QM_{rel}$ are parametrized by the degree. We may write $QM_{ns}^{\bd }$ and $QM_{rel}^{\bd }$ to denote degree $\bd $ quasimaps.

We will also need the space $\widetilde{QM}_{ev(\infty )=x}$ in the proofs. This is the moduli space of quasimaps with no parametrized component and $ev(\infty )=x$. In other words, $\widetilde{QM}$ is the bubble part of $QM_{rel}$.
Since the target is almost always $\text{Flag}(\mathbb{C}^{n})$ in this paper, we omit it from the notation. The only place where this is not the case is in Section \ref{subsec:smoothness}, where the target can be the Grassmannian, and we write it as $QM_{rel}(Gr(k,n))$.

\textbf{Torus action.}
Fix a basis $e_1,...,e_n$ of $\mathbb{C}^{n}$. The torus $\sA = (\mathbb{C}^*)^{n}$ acts on it by scaling the coordinates. Let $x_0$ be the standard flag. 
The $\sA$-fixed points of the flag variety have the form $w(x_0)$ for $w \in W$
the symmetric group of $n$ elements. When talking about the $\sA$ action, $ev(\infty )$ must be one of these points. Choosing a different point corresponds 
to permuting the equivariant variables. We sometimes drop the $w(x_0)$ and simply write $QM_{rel}$, $QM_{ns}$, etc. to declutter the notations.

Let $\sT = \sA \times \mathbb{C}^*_\epsilon $ denotes the torus acting on $QM$, where $\mathbb{C}^*_\epsilon $ acts on the domain.
$a_1,...,a_n, \epsilon $ denote equivariant variables. 
For any group $G$, we define $R(G):=H^*_{G}(pt)$. For example, $R(\sT) = \mathbb{C}[a_1,...,a_n, \epsilon ]$. 
We will talk about the weights of a torus action using cohomological notation (e.g. $a_1-a_2 + \epsilon $).

\textbf{Universal curve and tautological classes.}
Let $QM$ denote $QM_{ns}$ or $QM_{rel}$ or {\footnotesize ${\widetilde{QM}}$}. Let $\pi : \mathfrak{C} \to QM$ 
denote the universal curve on it. So each fiber of $\pi $ is either a $\mathbb{P}^1$ or a chain of $\mathbb{P}^1$'s. It has two sections denoted by $0$ and $\infty $ which maps to the corresponding points in the fiber. For $QM_{rel}$, when the domain bubbles up,
we will use $\infty '$ to denote the node on the parametrized component. (This does not give a section of $\pi $.)

We use $\mathcal{V}_i$ to denote the tautological bundles on $\mathfrak{C}$ for $i=1,...,n-1$ as in \eqref{eqn:flagofsheaves}. Let $\mathcal{F}_i:=0^* \mathcal{V}_i$.

Throughout the paper we use $\mathbb{C}$ coefficients for cohomology and for Chow groups. So 
$H^*(X)$ stands for $H^*(X, \mathbb{C})$ and $A_{\mathbb{C}}^*(X)$ stands for $A^*(X)\otimes \mathbb{C}$.
We will see that $A^*$ and $H^*$ are isomorphic for the spaces we consider.

\subsection{Smoothness of $QM_{rel}$} \label{subsec:smoothness}
Many results below relies on the smoothness of $QM_{rel}$, so we establish it here.

\begin{proposition}
$QM_{rel}$ is a smooth Deligne-Mumford stack.
\end{proposition}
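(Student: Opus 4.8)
The plan is to produce a two-term deformation--obstruction theory for $QM_{rel}$ and to show that its obstruction space vanishes at every point; together with smoothness of the Artin stack parametrizing the bubbled domains this gives smoothness. Equivalently, I would verify the infinitesimal lifting criterion: for a square-zero extension of local Artinian $\mathbb{C}$-algebras, the obstruction to lifting a quasimap lands in a single cohomology group which I will argue is zero.

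First I would separate the deformations of the domain from those of the map. There is a forgetful morphism from $QM_{rel}$ to the Artin stack $\mathfrak{M}$ of prestable domains --- chains of $\mathbb{P}^1$'s carrying the contraction $\varphi$ to $\mathbb{P}^1$ (an isomorphism on the parametrized component) and the marking $\infty$. The stack $\mathfrak{M}$ is smooth, because prestable curves together with their node-smoothings deform unobstructedly (there is no $H^2$ on a curve). Relative to $\mathfrak{M}$, the deformation theory of a quasimap $g = (\mathcal{G}, f)$ with fixed domain $C$ is governed by the hypercohomology of $g^* T_{[W/G]}$, where, setting $\mathcal{V}_n := \mathcal{O}_C^{\oplus n}$, the tangent complex is $T_{[W/G]} = [\mathcal{A}^{-1} \xrightarrow{\delta} \mathcal{A}^0]$ with
\[
\mathcal{A}^{-1} = \bigoplus_{i=1}^{n-1}\sheafend(\mathcal{V}_i), \qquad \mathcal{A}^0 = \bigoplus_{i=1}^{n-1}\sheafhom(\mathcal{V}_i, \mathcal{V}_{i+1}),
\]
and $\delta$ the infinitesimal $G$-action determined by the $\phi_i : \mathcal{V}_i \to \mathcal{V}_{i+1}$. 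The deformations are $\mathbb{H}^0$ and the obstructions $\mathbb{H}^1$; since the base $\mathfrak{M}$ is smooth, it suffices to show the relative obstruction $\mathbb{H}^1$ vanishes. The relative condition $ev(\infty)=x$ only restricts deformations to those fixing the value at $\infty$, which replaces $g^*T_{[W/G]}$ by its twist $g^*T_{[W/G]}(-\infty)$.

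The heart of the argument is the vanishing $\mathbb{H}^1(C, g^* T_{[W/G]}(-\infty)) = 0$. From the two short exact sequences attached to $\delta$ and the absence of $H^2$ on a curve, one identifies this group with $H^1(C, (\operatorname{coker}\delta)(-\infty))$, where $\operatorname{coker}\delta = \mathcal{H}^0 g^* T_{[W/G]}$ coincides with the pullback of $T_{\mathrm{Flag}(\mathbb{C}^n)}$ under the induced genuine map away from the finitely many base points and differs from it there by a torsion sheaf. Because the flag variety is homogeneous, $T_{\mathrm{Flag}(\mathbb{C}^n)}$ is globally generated, so on each $\mathbb{P}^1$-component its pullback is a sum of line bundles of non-negative degree; torsion sheaves on a curve have no $H^1$; and the single twist by $\mathcal{O}(-\infty)$ keeps all degrees $\geq -1$. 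This positivity is exactly the statement, underlying the Grassmannian building blocks $\mathcal{V}_k \subset \mathcal{O}^n$, that a subsheaf of a trivial bundle on $\mathbb{P}^1$ has non-positive summands while the quotient is globally generated. To pass from the components to the nodal curve $C$ I would use the normalization sequence: the node terms contribute in degree $0$, so $H^1$ on $C$ is controlled by the vanishing $H^1$ of each component. Finiteness of automorphisms --- equivalently that $\ker\delta$ does not produce positive-dimensional stabilizers --- follows from quasimap stability (non-constancy on each bubble), which upgrades the conclusion to a Deligne--Mumford stack.

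I expect the main obstacle to be the simultaneous bookkeeping of the bubbled domains and the relative marking: one must check that $\mathfrak{M}$ is genuinely smooth, that the per-component positivity survives restriction to a contracted bubble (where $\mathcal{V}_i|_{C_j} \to \mathcal{O}^n$ is still generically injective, hence injective, so its summands remain non-positive), and that the node and $\infty$ conditions enter only in degree $0$ and therefore create no obstruction. The positivity vanishing on a single smooth $\mathbb{P}^1$ is routine; it is the assembly over a chain, while keeping track of the relative condition and the stacky automorphisms, that requires the real care.
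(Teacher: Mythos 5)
Your overall architecture is the same as the paper's: work relative to the smooth stack $\mathfrak{M}$ of bubbled domains, use the two-term obstruction theory coming from the complex $\bigoplus_i \sheafend(\mathcal{V}_i)\xrightarrow{\phi}\bigoplus_i\sheafhom(\mathcal{V}_i,\mathcal{V}_{i+1})$, and kill the obstruction space $H^1\bigl(C,(\operatorname{coker}\phi)(-\infty)\bigr)$. (Your reduction to $\operatorname{coker}\phi$ is fine: $\ker\phi$ is a torsion subsheaf of a locally free sheaf, hence zero, because the $G$-action is free on the stable locus and each component is generically stable.) The genuine gap is in your positivity claim for $\mathcal{Q}=\operatorname{coker}\phi$. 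From the fact that $\mathcal{Q}$ agrees with $\bar g^*T_{\mathrm{Flag}}$ away from the base points you cannot conclude anything about the degrees of $\mathcal{Q}/\mathrm{torsion}$: two sheaves on $\mathbb{P}^1$ isomorphic off a finite set can have arbitrarily different degrees, and there is no globally defined natural map in either direction between $\mathcal{Q}$ and $\bar g^* T_{\mathrm{Flag}}$ (the comparison goes through the saturations $\bar{\mathcal{V}}_i\supset\mathcal{V}_i$, and the mixed variance of $\sheafhom(\mathcal{V}_i,\mathcal{V}_{i+1})$ blocks any functorial arrow), so the direction of the modification at base points is exactly what is unproven. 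Your "subsheaf of a trivial bundle / globally generated quotient" mechanism only controls the last term $\sheafhom(\mathcal{V}_{n-1},\mathcal{O}^n)=\mathcal{V}_{n-1}^\vee\otimes\mathcal{O}^n$, i.e.\ the Grassmannian case, where indeed $H^1(\mathcal{A}^0(-\infty))=0$ and $\phi$ is never needed. For the full flag variety the middle terms do have negative summands — already for $n=3$ take $\mathcal{V}_1=\mathcal{O}\subset\mathcal{V}_2=\mathcal{O}\oplus\mathcal{O}(-2)\subset\mathcal{O}^3$, so that $\sheafhom(\mathcal{V}_1,\mathcal{V}_2)\supset\mathcal{O}(-2)$ and $H^1(\mathcal{A}^0)\neq 0$ — and the entire content of the proposition is that these classes die in the cokernel, i.e.\ that $H^1(\phi)$ is surjective. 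Your outline never uses the map $\phi$, so at the crucial step it assumes what must be proven. A minimal repair would be a lemma that the generic identification extends to a sheaf injection $\bar g^*T_{\mathrm{Flag}}\hookrightarrow\mathcal{Q}$ (then global generation of $\bar g^*T_{\mathrm{Flag}}$ does force non-negativity of $\mathcal{Q}/\mathrm{torsion}$), but that extension requires a local computation at base points and is where the real work lies.

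A secondary but real error is the nodal assembly: "the node terms contribute in degree $0$, so $H^1$ on $C$ is controlled by the components" is false as stated. The normalization sequence gives $\bigoplus_j H^0(C_j,\cdot)\to\bigoplus_{\text{nodes}}\mathcal{Q}_r\to H^1(C,\cdot)\to\bigoplus_j H^1(C_j,\cdot)\to 0$, so besides componentwise vanishing you must prove the difference-of-evaluations map on sections is surjective; this needs global generation of $\mathcal{Q}/\mathrm{torsion}$ on the components and an induction along the chain starting from the component carrying $\infty$ (where, conveniently, no evaluation is needed, so the twist is harmless). Contrast this with the paper's proof, which avoids analyzing $\mathcal{Q}$ at arbitrary points altogether: since the obstruction theory is $\sT$-equivariant and the non-vanishing locus of $T^1$ is closed and invariant (hence, by properness, would contain a $\sT$-fixed point), it suffices to check fixed points, where all $\mathcal{V}_i$ split into explicit line bundles and each $u_i$ is pointwise injective on $C'=C\setminus\{0,\infty,\text{nodes}\}$; surjectivity of $H^1(\phi)$ is then proved by the inductive construction of $\underline{\alpha}_{i+1}$ from $\underline{\beta}_i-u_i\circ\underline{\alpha}_i$. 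That equivariant reduction is the idea your proposal is missing, and it is what makes the flag-variety case (as opposed to the Grassmannian case) tractable.
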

\begin{proof}
As discussed in Section 5 of \cite{ciocan2014stable}, $QM_{rel}$ has a perfect obstruction theory (i.e. a two term complex that maps to the cotangent complex that induces isomorphism on $H^0$ and surjection on $H ^{-1}$) defined as follows:
Let $\mathfrak{M}$ be the moduli stack of the domain curve. Then the relative obstruction theory over $\mathfrak{M}$ is given by 
\[ 
    \left(R ^\bullet \pi _*(\mathcal{Q})\right)^\vee 
\]
where $\mathcal{Q}$ is defined by the sequence
\begin{equation}\label{eqn:tvir}
    0 \to \bigoplus_{i=1}^{n-1} \sheafend (\mathcal{V}_i) \xrightarrow[]{\phi } \bigoplus_{i=1}^{n-1}\sheafhom(\mathcal{V}_{i}, \mathcal{V}_{i+1}) \to \mathcal{Q} \to 0
\end{equation}
Here, the map $\phi $ is defined in the following way: given a quasimap 
\[ 
    u = (u_1,...,u _{n-1}) \in \bigoplus_{i=1}^{n-1} \text{Hom}(\mathcal{V}_i, \mathcal{V}_{i+1}),
\]
(Note that the RHS is Hom, not sheaf $\sheafhom$.)
an element
\[ 
    \alpha =(\alpha _1,...,\alpha _{n-1}) \in \bigoplus_{i=1}^{n-1} \sheafend (\mathcal{V}_i)
\]
is sent to an element 
\[ 
    \beta =(\beta _1,...,\beta _{n-1}) \in \bigoplus_{i=1}^{n-1}\sheafhom(\mathcal{V}_{i}, \mathcal{V}_{i+1})
\]
by
\[ 
    \beta _i = u _{i} \circ \alpha _i + \alpha _{i+1}\circ u_i
\]
(and we let $\alpha _0 =  \alpha _n = 0$.)
\hide{Is it enough to check stalks on $\mathbb{C}$ points? Should be fine since one can pull back to a scheme first then pullback to a point.} 
To show that $QM_{rel}$ is smooth, it suffices to show that the $-1$ term of the perfect obstruction theory vanishes.
\hide{Remark that for fixed $ev(\infty)$ we need to twist the two terms by $\infty$, but the argument remains the same. Or use that not fixing $\infty$ is a locally trivial fibration over target.}

Since the perfect obstruction theory is equivariant with respect to the action of $\sT$, it suffices to show that the -1 term vanishes at $\sT$-fixed points. 
Fix a quasimap $u = (u_1,...,u _{n-1}) \in  QM_{rel}^\sT$. Let $C$ be the domain curve of $u$ and let $C'$ be the open subset of the curve $C$ removing $0, \infty$ and all the nodes. The cohomology of the stalk of $R ^\bullet \pi _*(\mathcal{Q})$ at $f$, denoted by $T^0$ and $T ^{1}$, fits into the long exact sequence induced by \eqref{eqn:tvir}:
\begin{align*}
    0 \to H^0 \left(\bigoplus_{i=1}^{n-1} \sheafend (\mathcal{V}_i)\right) \to  H^0\left(\bigoplus_{i=1}^{n-1}\sheafhom(\mathcal{V}_{i}, \mathcal{V}_{i+1})\right) \to T^0 & \\ 
    \to H^1 \left(\bigoplus_{i=1}^{n-1} \sheafend (\mathcal{V}_i)\right) \xrightarrow[]{\underline{\phi }}  H^1\left(\bigoplus_{i=1}^{n-1}\sheafhom(\mathcal{V}_{i}, \mathcal{V}_{i+1})\right) \to T^1 &
    \to 0 \\
\end{align*} 
We want to show that $T^1 = 0$ for any $u$, which is equivalent to show that $\underline{\phi }$ is surjective. To this end, note that any element in  $ H^1\left(\bigoplus_{i=1}^{n-1}\sheafhom(\mathcal{V}_{i}, \mathcal{V}_{i+1})\right) $ is represented by a section over $C'$
\[ 
\underline{\beta } = (\underline{\beta }_1,..., \underline{\beta }_{n-1})     
\in \Gamma (C', \bigoplus_{i=1}^{n-1}\sheafhom(\mathcal{V}_{i}, \mathcal{V}_{i+1}))
\]
that does not extend over any point in $C \backslash C'$. We define
\[ 
\underline{\alpha } = (\underline{\alpha }_1,..., \underline{\alpha }_{n-1})     
 \in \Gamma (C', \bigoplus_{i=1}^{n-1}\sheafend(\mathcal{V}_{i}))
\]
inductively: $\underline{\alpha }_1$ is a scalar and can be chosen arbitrarily. Once $\underline{\alpha }_i$ is chosen, $\underline{\alpha} _{i+1}$ is chosen so that 
\[ 
\underline{\alpha} _{i+1}\circ u_i=
    \underline{\beta} _i - u _{i} \circ 
    \underline{\alpha} _i 
\]
Since $u$ is fixed by $\sT$, each $u_i$ must be injective \textit{pointwise} on $C'$. 
Thus, the section $\underline{\alpha }_{i+1}$ above  always exists. By construction, $\underline{\phi }(\underline{\alpha }) = \underline{\beta }$.
\hide{Why other points don't lead to a contradiction if it has a pole somewhere? I think it's because one has the freedom to choose representatives. The $\alpha $ we are looking for may not map to $\beta $ on the nose, but only up to something extendable. This helps to move poles around.}
This shows that $\underline{\phi }$ is surjective, so $T^1 = 0$.  
\end{proof}

\hide{So far we have considered the relative obstruction theory with respect to the forgetful map $QM_{rel}\to \mathfrak{M}$. One can use Proposition 3 of \cite{kim2003functoriality} to turn this into an absolute obstruction theory. The cotangent complex of $\mathfrak{M}$ has no degree -1 cohomology. Thus, the absolute obstruction theory has no degree -1 cohomology either.}
\hide{What's the best way to compute cotangent complex of $\mathfrak{M}$? I think using https://mathoverflow.net/questions/169025/cotangent-complexes-of-quotient-stacks we can show that the $\mathfrak{g}$ map is injective so has no $H ^{-1}$.}
\hide{Another option is to use smoothness of $\mathfrak{M}$.}

\subsection{\texorpdfstring{$QM_{rel}$}{QMrel} as a global quotient}
\todo{Change this to a Proposition}
\begin{proposition}\label{prop:global}
$QM_{rel}$ is isomorphic to a quotient stack $[X/G]$ where $X$ is a quasi-projective scheme and $G = (\mathbb{C}^*)^{N}$ for some $N$. In addition, there is a torus $\sT$ action on $X$ that descends to the action of $\sT$ on $QM_{rel}$.
\end{proposition}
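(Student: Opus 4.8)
The plan is to present $QM_{rel}$ as a quotient of a relative quasimap space living over a rigidified model of the domain-curve stack. I would work one connected component $QM_{rel}^{\bd}$ at a time: by the non-constancy condition in the definition, every bubble carries strictly positive degree, so a fixed $\bd$ bounds the number of bubbles by some $N$, and it suffices to produce a presentation with $G=(\mathbb{C}^*)^N$ on each component. The only positive-dimensional automorphisms of a quasimap come from rescaling the bubbles: as noted after \eqref{eqn:flagofsheaves}, the data is recorded by honest subsheaves $\CV_1\subset\cdots\subset\CV_{n-1}\subset\CO_C^{\oplus n}$, so the gauge group $\prod GL(i)$ is rigidified away, and the parametrized component is rigid through its identification with the target $\mathbb{P}^1$ via $\varphi$. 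Thus all of the stackiness is concentrated in the $(\mathbb{C}^*)^{(\#\,\mathrm{bubbles})}$ that rescales each bubble while fixing its two special points.

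Let $\mathfrak{M}$ denote the stack of domain curves used in the smoothness proof, i.e. the stack of expansions of the pair $(\mathbb{P}^1,\infty)$ with at most $N$ bubbles. First I would invoke the standard global-quotient presentation of this expanded-degeneration (``accordion'') stack, $\mathfrak{M}\cong[U/(\mathbb{C}^*)^N]$, where $U$ is a smooth quasi-projective scheme carrying $N$ smoothing coordinates $t_1,\dots,t_N$ and the torus $(\mathbb{C}^*)^N$ rescales the bubbles. The key point is that this single torus handles the varying number of bubbles uniformly: the locus $t_i\neq 0$ smooths the $i$-th node, contracting a bubble, and there the corresponding factor of $(\mathbb{C}^*)^N$ acts with nonzero weight on $t_i$, so no extra stabilizer appears. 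In this way the open stratum $QM_{ns}$ and all the boundary strata with fewer bubbles are incorporated into one quotient by the same $(\mathbb{C}^*)^N$.

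Next I would define $X:=QM_{rel}^{\bd}\times_{\mathfrak{M}}U$ and show that the projection $X\to U$ is representable by a quasi-projective scheme. Over the universal rigidified curve on $U$, a quasimap of degree $\bd$ is a flag of subsheaves of $\CO^{\oplus n}$ with fixed ranks and degrees; such data form a locally closed subscheme of a product of relative Quot schemes over $U$ (the node- and evaluation-matching being closed, the stability and non-constancy being open conditions), so $X$ is quasi-projective. Because $U\to\mathfrak{M}$ is a $(\mathbb{C}^*)^N$-torsor, its pullback $X\to QM_{rel}^{\bd}$ is one as well, yielding a canonical isomorphism $[X/(\mathbb{C}^*)^N]\xrightarrow{\sim}QM_{rel}^{\bd}$. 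Finally the $\sT$-action lifts: $\sA$ acts through $\mathbb{C}^n$, hence on $\CO_C^{\oplus n}$ and all the subsheaves, while $\mathbb{C}^*_\epsilon$ acts on the target $\mathbb{P}^1$ and thus on the expansions; both actions are functorial, preserve the quotient presentation, and commute with the internal rescaling torus $(\mathbb{C}^*)^N$, so they descend to the $\sT$-action on $QM_{rel}$.

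I expect the main obstacle to be the two representability inputs and their compatibility: establishing the global torus-quotient presentation of the expansion stack $\mathfrak{M}$ and verifying that the relative space of flags of subsheaves over its quasi-projective atlas is genuinely quasi-projective (in particular bounding the Quot schemes and checking that the matching and stability conditions are locally closed). Once $\mathfrak{M}=[U/(\mathbb{C}^*)^N]$ is in hand, the remainder is a formal base change along a torsor, so the real work lies in this presentation together with the quasi-projectivity of the fiberwise flag-Quot construction.
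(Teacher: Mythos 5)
You correctly identified the load-bearing step of your argument --- the global presentation $\mathfrak{M}\cong[U/(\mathbb{C}^*)^N]$ of the expansion stack --- but this step is not merely unproven: it is false for $N\ge 2$, and no such presentation can exist. Here is the obstruction. Suppose $\mathfrak{M}\cong[U/T]$ with $U$ a scheme (or algebraic space) and $T$ a torus. Points of $[U/T]$ are $T$-orbits in $U$, and since $T$ is abelian the stabilizer subgroup is literally constant along each orbit (and an element fixing a point fixes the whole orbit closure pointwise); so every point of $\mathfrak{M}$ would acquire a single well-defined stabilizer subgroup of $T$. Now let $p_1,p_2\in\mathfrak{M}$ be the unique one-bubble and two-bubble points, so $p_2\in\overline{\{p_1\}}$ and $\mathrm{Aut}(p_2)\cong(\mathbb{C}^*)^2$. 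The \'etale-local model of $\mathfrak{M}$ at $p_2$ is $[\mathbb{A}^2/(\mathbb{C}^*)^2]$, where $\mathbb{A}^2$ is the node-smoothing space and $(\mathbb{C}^*)^2$ acts by two linearly independent characters $\chi_1,\chi_2$ (one for each node). The two punctured coordinate axes have stabilizers $\ker\chi_1$ and $\ker\chi_2$, which are \emph{distinct} subtori --- yet both axes map to the \emph{same} point $p_1$ of $\mathfrak{M}$, precisely because $\mathfrak{M}$ does not remember which node was smoothed. Pulling the torsor $U$ back along this chart and using that equivariant \'etale maps preserve identity components of stabilizers, the points of $U$ over $p_1$ would need stabilizer with identity component equal to both $\ker\chi_1$ and $\ker\chi_2$ inside $\mathrm{Stab}(p_2)$, a contradiction. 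What is true and standard is weaker: the paper's $\mathbb{C}^N$ with its universal curve $\mathfrak{C}_N$ gives an \'etale map $[\mathbb{A}^N/(\mathbb{C}^*)^N]\to\mathfrak{M}$ which has $\binom{N}{k}$ sheets over the $k$-bubble point, one for each choice of which $k$ of the $N$ nodes persist.

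Consequently, your fiber product $X=QM_{rel}^{\bd}\times_{\mathfrak{M}}U$, carried out along the honest chart $\mathbb{A}^N\to\mathfrak{M}$, produces a stack $[X/(\mathbb{C}^*)^N]$ that is an \'etale cover of $QM_{rel}^{\bd}$ --- it remembers which of the $N$ slots each bubble occupies --- rather than $QM_{rel}^{\bd}$ itself. The missing idea, which is the actual content of the paper's proof, is the passage from $Q_2$ to $Q_3$: one restricts to the open locus where the degree distribution of the quasimap on the components of the curve over $U_I$ is dictated by $I$ itself (the $k$-th component carries total degree $i_k-i_{k-1}$), so that every isomorphism class of quasimap appears over exactly one stratum; the resulting map $[Q_3/(\mathbb{C}^*)^N]\to QM_{rel}^{\bd}$ is then a surjective \'etale monomorphism, hence an isomorphism. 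Note that this selection device lives at the level of the quasimap moduli, not the curve stack: it uses the degrees of the $\CV_i$, which do not exist on $\mathfrak{M}$ alone. This is also why $QM_{rel}$ itself escapes the obstruction above: non-constancy on bubbles makes its stabilizers finite (it is Deligne--Mumford), whereas $\mathfrak{M}$ has positive-dimensional ones. Your Quot-scheme representability and $\sT$-equivariance steps agree with the paper and are fine, but without the degree-matching selection the proof does not close.
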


\begin{proof}
The construction can be carried out similar to \cite{marian2011moduli} Section 6 using Quot schemes. The key differences are: (1) We are considering quasimap from \textit{parametrized} $\mathbb{P}^1$ and (2) The target is the flag variety instead of Grassmannian.

As discussed in Section 6.4.4 of \cite{okounkov2015lectures}, for any given $N$, there is a universal curve $\mathfrak{C}_N$ living over $\mathbb{C}^n$ such that for any subset $I \subset \{1,2,...,N\}$, the fiber over a point in
\begin{equation}\label{eqn:openUI}
    U_I:= \{(x_1,...,x_N)\in \mathbb{C}^N| x_i=0 \text{ if and only if }i \in I\}
\end{equation}
is a chain of $|I|+1$ $\mathbb{P}^1$'s. Each divisor $\{x_i=0\}$ is the loci where the $i$-th node remains intact.

Fix a degree $\bd =(d_1,...,d _{n-1})$. Let $N = |\bd | = d_1 + ... + d _{n-1}$.
Consider the product
\[ 
    Q_0 := Q(n-1,d_1) \times _{\mathfrak{C}_N} Q(n-2,d_2) \times _{\mathfrak{C}_N} ... \times _{\mathfrak{C}_N} Q(1, d _{n-1})
\]
where $Q(n-r,d)$ stands for the relative Quot scheme parametrizing quotients
\[ 
    \mathcal{O}_C^n \to Q_r \to 0
\] 
where $Q$ has rank $n-r$ and degree $d$. 

Let ${V}_r$ be the kernel of the map $ \mathcal{O}_C^n \to Q_r$ for each $r$. Then $V_r$ is locally free of rank $r$ for $r=1,2,...,n-1$. Now let $Q_1$ be the open subset of $Q_0$ such that $V_r \to \mathcal{O}_C^n$ is injective on the nodes and the marked point, and that each $\mathbb{P}^1$ component has degree at least 1. Let $Q_2$ be the closed subset of $Q_1$ where $V_r$ is a subsheaf of $V _{r+1}$ for each $r=1,2,...,n-1$.

Now we are close to what we want. But note that the curves over $U_I$ defined in \eqref{eqn:openUI} for each $I$ may correspond to isomorphic curves when two sets $I_1$ and $I_2$ have the same cardinality. To avoid parametrizing the same quasimap twice, the quasimaps over them should have different degrees on each component $\mathbb{P}^1$. More precisely, let $\pi : Q_2 \to \mathbb{C}^{N}$ denote the projection.
For each 
\[ 
    I = \{i_1<i_2<...<i _{|I|}\},
\]
consider the subset of $\pi ^{-1}(U_I)$ where the quasimap on the $k$-th $\mathbb{P}^1$ component in the domain has total degree (i.e. sum of degrees of $\CV_1,...,\CV _{n-1}$ on this $\mathbb{P}^1$) equal to $i _{k}-i_{k-1}$, where $k=1,2,...,|I|+1$ and we set $i _{|I|+1} = N+1, i_0=1$. When $I$ ranges over all possible subsets, this gives a subset $Q_3 \subset Q_2$. By analyzing how node smoothing changes the degree, we see that $Q_3$ is an open subset of $Q_2$. 

The action of $(\mathbb{C}^*)^{N}$ lifts to $Q_3$. And there is a natural $\sT$ action on $Q_3$ commuting with the action of $(\mathbb{C}^*)^{N}$. With these constructions, the moduli space $QM_{rel}^\bd $ is isomorphic to the stack $[Q_3/(\mathbb{C}^*)^{N}]$.
\end{proof}

\begin{remark}
At fixed points that are orbifold points, the tangent weight usually involves fractional multiples of equivariant variables. However, in this quotient construction, 
the group $\sT$ acts on $Q_3$ without taking multiple covers. 
This is not a contradiction. A prototypical example is to have 
$U = \mathbb{C} \times \mathbb{C}^*$ with an action of $\mathbb{C}^*_t$ with weight 
$(t,t^2)$ and $\mathbb{C}^*_a$ with weight $(1,a)$. 
Then $[U/\mathbb{C}^*_t]$ is isomorphic to 
$[\mathbb{C}/\mu_2]$, 
and the action of 
$\mathbb{C}^*_a$ on $\mathbb{C}$ is $\sqrt{a}$. 
This happens when we consider $QM_{rel}^{d=2}(Gr(1,2))$.
\end{remark}

\subsection{Fixed points and tangent spaces} \label{subsec:fixed_locus}

\subsubsection*{Fixed Points in $QM_{ns, w(x_0)}$}
As discussed in \cite{braverman2005finite}, \cite{feigin2011gelfand}, fixed points in $QM_{ns, w(x_0)}$ are parametrized by tuples of non-negative integers
\[ 
    d _{i, w(j)},\; i=1,...,n-1,\; j = 1,...,i
\]
such that $d _{i_1, w(j)} > d _{i_2, w(j)}$ if $i_1<i_2$.
This corresponds to a quasimap where the vector bundles $\mathcal{V}_k, k=1,...,n-1$ decomposes as 
\[ 
    \mathcal{V}_k = \sum_{i=1}^k a _{w(i)} \mathcal{O}_{\mathbb{P}^1}(-d _{k, w(i)})
\]
with obvious maps between the $\mathcal{V}_k$'s.

\subsubsection*{Fixed Points in $QM_{rel, w(x_0)}$}
\begin{proposition}\label{prop:fp}
Fix $w \in W$.
Let $f \in QM_{rel,w(x_0)}$ be a $\sT$-fixed point and assume the domain of $f$ is a 
chain of $N+1$ $\mathbb{P}^1$'s. Let $r_1,...,r_N$ denote the nodes of the domain. Then
\begin{enumerate} [label=(\arabic*)]
    \item There exists $w_1,...,w_N \in W$ such that $f(r_i) = w_i(x_0)$. \label{enum:fp1}
    \item Let $f' = f| _{\text{parametrized }\mathbb{P}^1}$. Then $f'$ is a $\sT$-fixed point in $QM_{ns, w_1(x_0)}$.
    \item Let $w _{N+1}=w$. Then for each $i=1,...,N$, either $w _{i+1}=w_i$ or $w _{i+1} = s w_i$ for some simple reflection $s \in W$. \label{enum:fp3}
    \item If $w_i \neq w _{i+1}$ for some $i$, then the $(i+1)-$th $\mathbb{P}^1$ in 
    the domain maps to the $\mathbb{P}^1$ in the flag variety connecting $w_i(x_0)$ and $w _{i+1}(x_0)$. 
    This map is a $d_i$-fold covering for some $d_i \in \mathbb{Z}_+$. \label{enum:fp4}
    
    \item If $w_i = w _{i+1}$ for some $i$, then the map on the $(i+1)-$th $\mathbb{P}^1$ in 
    the domain is a ``constant'' map (with singular points). The fixed points may be non-isolated in this case.
    \item If $w_i \neq w _{i+1}$ for all $i=1,..,N$, then $f', w_i, d_i, i=1,...,N$ determine $f$ uniquely, and $f$ is an isolated fixed point. \label{enum:fp6}
\end{enumerate}
\end{proposition}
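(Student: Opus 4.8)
The plan is to analyze a $\sT$-fixed point $f$ one irreducible component at a time, exploiting that $\sT$ acts on the target only through $\sA$ and on the domain only through $\mathbb{C}^*_\epsilon$ together with reparametrization of the collapsed bubbles. First I would dispose of (1) and (2). The nodes $r_i$ and the marking $\infty$ are intrinsic to the domain, so any isomorphism realizing $t\cdot f\cong f$ must preserve them; hence the evaluations $ev_{r_i}$ and $ev_\infty$ are $\sT$-equivariant maps to $\mathrm{Flag}(\mathbb{C}^n)$, with $\sT$ acting on the target through $\sA$. Since $f$ is fixed, each $f(r_i)$ is an $\sA$-fixed point of $\mathrm{Flag}(\mathbb{C}^n)$, and these are exactly the coordinate flags $w_i(x_0)$, giving (1). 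For (2), I restrict $f$ to the parametrized component $B_0$: because automorphisms of the parametrized $\mathbb{P}^1$ yield non-isomorphic quasimaps, the reparametrization used to fix $f$ must act on $B_0$ through the honest $\mathbb{C}^*_\epsilon$-scaling, so $f'=f|_{B_0}$ is itself $\sT$-fixed, it is nonsingular at $r_1=\infty'$ with $ev(\infty')=w_1(x_0)$, and therefore $f'\in QM_{ns,w_1(x_0)}^\sT$.

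For (3)--(5) I would fix a bubble $B=B_i$ with nodes $r_i,r_{i+1}$ carrying flags $w_i,w_{i+1}$ (with $w_{N+1}=w$). Since $B$ is collapsed by $\varphi$, the isomorphisms realizing $\sT$-invariance act on $B$ through a single character $\beta\colon \sT\to\mathbb{C}^*$, valued in $\mathbb{Q}$-characters since $B$ may be an orbifold point (this is the phenomenon in the remark after Proposition \ref{prop:global}). I then decompose each $\mathcal{V}_m|_B$ into $\sT$-eigen-line-bundles. At a node all structure maps are injective (nodes are stable), so the fiber $\sA$-weights of these summands at $r_i$ (resp. $r_{i+1}$) read off the flag $w_i$ (resp. $w_{i+1}$); and an eigen-line-bundle of degree $d$ has its two fiber weights differing by $d\beta$. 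Hence a summand whose coordinate changes between the two nodes forces a relation $a_q-a_p=d\,\beta$ with $d\geq 1$. Because $a_1,\dots,a_n$ are generic, every such relation pins $\beta$ to the same root direction $a_q-a_p$, so all changing summands swap a single coordinate pair $\{p,q\}$. Thus either nothing changes, giving $w_{i+1}=w_i$ and a constant-on-$\mathrm{Flag}$ (but possibly singular) bubble, which is (5); or exactly $\{p,q\}$ is interchanged, so $w_{i+1}=s_\alpha w_i$ for the reflection $\alpha=e_p-e_q$, the honest part of $f|_B$ maps onto the $\sT$-invariant $\mathbb{P}^1$ through $w_i(x_0)$ in direction $\alpha$, and $\beta=(a_p-a_q)/d_i$ identifies $f|_B$ as a $d_i$-fold cover, which is (4).

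The remaining content of (3), that $\alpha$ is a \emph{simple} root, is the main obstacle. The weight argument above by itself only yields a reflection, and a priori a single bubble could sweep the invariant $\mathbb{P}^1$ attached to a non-simple root (for which the multidegree $\alpha^\vee$ is supported on several factors). To upgrade to a simple root I would analyze the nested inclusions $\mathcal{V}_1\subset\cdots\subset\mathcal{V}_{n-1}$ on $B$ together with the per-component degree normalization imposed in the global-quotient presentation of Proposition \ref{prop:global}, with the aim of showing that a non-simple $\alpha$ forces the bubble to break into a further chain (so it is not a single stable $\mathbb{P}^1$). This compatibility between the eigen-bundle rotation and the flag nesting is where I expect the real work to lie.

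Finally, for (6), when all $w_i\neq w_{i+1}$ each bubble is by (4) a cover of a definite invariant $\mathbb{P}^1$ of definite degree $d_i$, with distinct generic reparametrization characters $\beta_i=(a_{p_i}-a_{q_i})/d_i$. I would reconstruct $f$ by gluing $f'$ to these covers along the nodes, noting that the gluing carries no moduli: the node flags $w_i$ are forced, each cover is unique up to the bubble automorphism already quotiented out, and $f'$ is an isolated point of $QM_{ns}^\sT$. Isolatedness of $f$ then follows from smoothness of $QM_{rel}$ together with vanishing of the weight-zero part of the tangent space $T^0$, which I would verify by computing the $\sT$-weights of $H^0$ of the deformation sequence \eqref{eqn:tvir} on the chain and observing that genericity of $a_1,\dots,a_n,\epsilon$ leaves no weight-zero deformation.
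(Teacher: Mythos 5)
The step you isolate as ``the main obstacle'' --- upgrading the reflection produced by your weight argument to a \emph{simple} reflection --- is not a gap you can close, because that refinement is false. Take $n=3$, let the parametrized $\mathbb{P}^1$ carry the constant map $x_0$, and glue to it at $\infty$ a single bubble mapping isomorphically onto the invariant curve $\{\langle e_1+ze_3\rangle\subset\langle e_1+ze_3,\,e_2\rangle\}\subset\mathrm{Flag}(\mathbb{C}^3)$, with the node at $z=0$ and the marked point at $z=\infty$. This is a stable relative quasimap of degree $(1,1)$ with $ev(\infty)=(13)(x_0)$, i.e.\ a point of $QM_{rel,w(x_0)}$ for $w=(13)$, and it is $\sT$-fixed: the $\sA$-action is compensated by the bubble rescaling $z\mapsto (t_3/t_1)z$, while the $\mathbb{C}^*_\epsilon$-action is compensated by the corresponding rescaling of the parametrized component, under which the constant map is invariant. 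Here $w_1=e$ and $w_2=w=(13)$, which is not of the form $s w_1$ for any simple reflection $s$. So nothing forces a bubble covering a non-simple-root curve to break into a chain, and the strategy you propose for completing (3) would fail.

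This does not put you behind the paper, whose own proof establishes exactly what your eigen-line-bundle argument establishes and no more. The paper argues that invariance compensated by bubble rescaling forces the singular locus to be empty when $w_i\neq w_{i+1}$ (the singular locus is finite, avoids the nodes by stability, and is preserved by a nontrivial rescaling whose only fixed points are the nodes), hence the bubble honestly covers a $\sT$-invariant $\mathbb{P}^1$ in the flag variety; but the invariant $\mathbb{P}^1$'s of the full flag variety join $u(x_0)$ to $s_\alpha u(x_0)$ for \emph{all} positive roots $\alpha$, not only simple ones, so the conclusion actually available is $w_{i+1}=s_\alpha w_i$ with $s_\alpha$ an arbitrary reflection. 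The paper itself relies on this weaker (correct) version later: the multiplicity count in Section \ref{sec:Hpw} enumerates bubble configurations by paths in the Bruhat graph, whose edges correspond to all reflections. So part (3) should be read with ``reflection'' in place of ``simple reflection''; with that reading, your proposal minus the doomed upgrade step is a complete proof, and is in fact more detailed than the paper's in two places: the eigen-line-bundle decomposition replacing the compensation argument for (3)--(5) (where, note, the $a_i$ are independent characters of $\sA$, so no genericity assumption is needed), and the weight-zero-tangent-space argument for (6), which the paper dismisses as easy.
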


\begin{proof}
For part \ref{enum:fp3} and \ref{enum:fp4}, since each $\mathbb{P}^1$ in the bubble,
the map is invariant under $\sT$ if and only if the action can be compensated by scaling the $\mathbb{P}^1$.
This means that if $w _{i} \neq w _{i+1}$, then there is no singular point on the $(i+1)$-th $\mathbb{P}^1$. 
(Otherwise the scaling will move it.) So it is an actual map from $\mathbb{P}^1$
to the flag variety. The image must be an invariant $\mathbb{P}^1$ in the flag variety, hence the conclusion. Other parts of the statement are easy.
\end{proof}

\begin{remark}\label{rmk:p1weight}
In the situation of part \ref{enum:fp6} above, we will say the weight of the $(i+1)-$th $\mathbb{P}^1$ is the weight of its image in $\textnormal{Flag}(\mathbb{C}^{n})$ divided by $d_i$.   
\end{remark}

We will use notations like $(\bla, \widetilde{f})$ or $(\bla, \widetilde{P})$ to 
denote a fixed component where $\bla$ is a fixed point in $QM_{ns}$ and $\widetilde{f}$
or $\widetilde{P}$ is a fixed component in $\widetilde{QM}$.
The tangent space at fixed components split into contribution from $QM_{ns}$ and $\widetilde{QM}$:

\begin{proposition}\label{prop:tangent_add}
Let $(\bla , \widetilde{P})$ be a fixed component in $QM_{rel}$ where $\bla$ is in $QM_{ns, w(x_0)}$ for some $w \in W$. Then 
\[ 
    T _{(\bla, \widetilde{P})}QM_{rel} = T_\bla QM_{ns, w(x_0)} \boxplus (T _{\widetilde{P}} \widetilde{QM} \oplus \psi')
\]
where $\psi'$ on $\widetilde{QM}$ is the tangent line of the domain curve at $0$ (with a weight $(-1)$ $\mathbb{C}^*_\epsilon $ action.)
\end{proposition}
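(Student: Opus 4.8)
The plan is to compute the tangent space directly from the perfect obstruction theory established earlier, taking advantage of the fact that $QM_{rel}$ is smooth (so $T^1 = 0$ and the tangent space is exactly $T^0$, the hypercohomology $H^0$ of the complex $\mathcal{Q}$ from \eqref{eqn:tvir}). Since everything is $\sT$-equivariant, it suffices to work at a fixed point of the component $(\bla, \widetilde{P})$ and track the weight decomposition. The geometric input is the normalization sequence for the nodal domain curve $C$: writing $C = C_0 \cup C_{\text{bub}}$, where $C_0$ is the parametrized $\mathbb{P}^1$ and $C_{\text{bub}}$ is the chain of bubbles, these two pieces are glued at the node $\infty'$. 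I would localize $R^\bullet\pi_*(\mathcal{Q})$ along this decomposition and argue that the cohomology splits as a direct sum coming from $C_0$ and from $C_{\text{bub}}$, up to a correction term supported at the gluing node $\infty'$.

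The key steps, in order, are as follows. First, I would set up the Mayer--Vietoris / normalization long exact sequence for the sheaf complex $\mathcal{Q}$ (or equivalently for the two terms $\bigoplus \sheafend(\mathcal{V}_i)$ and $\bigoplus \sheafhom(\mathcal{V}_i, \mathcal{V}_{i+1})$) associated to writing $C$ as the union of $C_0$ and $C_{\text{bub}}$ glued at $\infty'$. Second, I would identify the $C_0$ contribution: restricting the tautological data to the parametrized $\mathbb{P}^1$ and imposing the value at its node $\infty'$ recovers precisely the deformation theory of a nonsingular quasimap with fixed evaluation, i.e.\ $T_\bla QM_{ns, w(x_0)}$. (Here the condition $f(\infty') = w(x_0)$ is what pins the evaluation, matching the definition of $QM_{ns, w(x_0)}$.) Third, I would identify the $C_{\text{bub}}$ contribution as $T_{\widetilde{P}}\widetilde{QM}$, the deformations of the bubble part as a quasimap with no parametrized component. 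Fourth, I would account for the gluing/smoothing data at the node $\infty'$: the node-smoothing deformation of the nodal curve contributes the tensor product of the tangent lines of the two branches at the node. On the $C_0$ side the branch tangent line at $\infty'$ carries the $\mathbb{C}^*_\epsilon$-weight coming from scaling the parametrized $\mathbb{P}^1$; this is the factor recorded as $\psi'$, the tangent line at $0$ with a weight $(-1)$ twist. The claim that the total splits as an exterior/box sum $\boxplus$ over the two factors $QM_{ns}$ and $\widetilde{QM}$ reflects that these deformations are genuinely independent directions in the product of moduli, with $\psi'$ attached to the $\widetilde{QM}$ factor.

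The main obstacle I anticipate is the careful bookkeeping at the node $\infty'$: I must show that the smoothing of the node does \emph{not} produce an honest extra deformation direction of $QM_{rel}$ beyond what is already captured, and instead correctly matches the smoothing parameter to the tangent-line factor $\psi'$. Concretely, one has to verify that the coboundary maps in the normalization sequence behave as expected, that the value at the node (the evaluation $w(x_0)$) is rigid under the $\sT$-action so it contributes no moving weight, and that the $\mathbb{C}^*_\epsilon$-weight on the smoothing parameter is exactly the negative of the weight on the tangent line to $C_0$ at $0$. This is where the $\sT$-fixed point structure from Proposition \ref{prop:fp} is essential: because at a fixed point the parametrized $\mathbb{P}^1$ is acted on by $\mathbb{C}^*_\epsilon$ with the standard weight, the identification of the smoothing weight with $\psi'$ is forced. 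Once these weight identifications are pinned down, the box-sum decomposition follows formally from the splitting of the hypercohomology, and the smoothness result guarantees there is no $H^1$ to obstruct or correct the count.
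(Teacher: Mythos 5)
Your geometric endgame --- normalize the domain at $\infty'$, pin the evaluation at the node on the parametrized side so that factor becomes $T_\bla QM_{ns,w(x_0)}$, and read off the node smoothing as a tensor product of branch tangent lines --- is the same decomposition the paper uses. But your starting point contains a genuine error that your later steps cannot repair. The complex $R^\bullet\pi_*(\mathcal{Q})$ of \eqref{eqn:tvir} is the obstruction theory \emph{relative} to the moduli stack $\mathfrak{M}$ of domain curves, so its degree-zero hypercohomology consists only of deformations of the quasimap with the nodal domain held fixed. The claim ``$T^1=0$, hence $T_{(\bla,\widetilde{P})}QM_{rel}=T^0$'' is therefore false: the absolute tangent space is $T^0$ \emph{plus} $T\mathfrak{M}$, and it is $T\mathfrak{M}$ --- not any normalization or Mayer--Vietoris sequence for $\mathcal{Q}$ on the fixed curve $C$ --- that contains every node-smoothing direction and subtracts the infinitesimal automorphisms of the bubbles. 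Concretely, the correction term ``supported at the gluing node'' in your normalization sequence is the fiber $\mathcal{Q}|_{\infty'}\cong T_{w(x_0)}\mathrm{Flag}(\mathbb{C}^n)$, i.e.\ the evaluation-matching condition; it cancels precisely when you pin the evaluation on the $C_0$ side, and it is \emph{not} the smoothing line. So your fourth step grafts in the $\psi'$ term from outside a framework that cannot produce it.

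The omission matters beyond $\infty'$: you never account for the smoothings of the nodes \emph{between} bubbles, nor for the $\mathbb{C}^*$ of automorphisms of each bubble component, and without them the bubble-side piece of your sequence (fixed-domain map deformations along the bubble chain) is not $T_{\widetilde{P}}\widetilde{QM}$. Since $\widetilde{QM}$ is a moduli stack of quasimaps with unparametrized domain, its tangent space at $\widetilde{P}$ equals (fixed-domain deformations) $+$ (internal node smoothings) $-$ (one $\mathbb{C}^*$ per bubble). The paper's proof is exactly this bookkeeping in one line, $TQM_{rel}=T_{\text{fixed domain}}+(\text{node smoothings})-(\text{bubble automorphisms})$, after which all bubble-related terms assemble into $T_{\widetilde{P}}\widetilde{QM}$ and the single leftover is the smoothing at $\infty'$. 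To repair your write-up: replace the opening claim by this relative-over-$\mathfrak{M}$ decomposition; keep your normalization argument for the fixed-domain part (noting the surjectivity of the evaluation differential at $\infty'$ on the $C_0$ side, which is what makes pinning the evaluation legitimate); and state the weight bookkeeping precisely --- $\psi'$ is the tangent line of the \emph{bubble} curve at its attaching point, the $C_0$ branch contributes only a trivial line of weight $-\epsilon$, and the node smoothing is their tensor product. As written, your text reads as if the $C_0$ branch line itself were $\psi'$.
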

\begin{proof}
Note that
\[ 
    T QM_{rel} = T _{\text{fixed domain}} + \text{Deformation of nodes} - \text{Automorphism of bubbles}
\]
For each node $r$, deformation of the node is given by tensoring the tangent line 
of the two $\mathbb{P}^1$'s adjacent to it. The tangent line from $QM_{ns}$ is trivial
with a weight $(-1)$ $\mathbb{C}^*_\epsilon $ action because it's always a $\mathbb{P}^1$, while the tangent line from $\widetilde{QM}$ is $\psi '$.
\end{proof}

The following lemma follows immediately from the quotient construction and the description of fixed points.

\begin{lemma}
In the setting of Theorem \ref{prop:global}, let $\pi : X \to [X/G]$ denote the projection map and let $F$ be a connected component in the $\sT$-fixed locus of $[X/G]$. Then, possibly after replacing $\sT$ by a multiple cover $\widetilde{\sT}$, the action of $\widetilde{\sT}$ on $X$ can be chosen so that the action on $\pi ^{-1}(F)$ is trivial. 
\end{lemma}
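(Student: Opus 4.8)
The plan is to exploit that $\pi\colon X\to[X/G]$ is a $G$-torsor and that $\sT$ commutes with $G$, so that the failure of $\sT$ to act trivially on $Y:=\pi^{-1}(F)$ is governed entirely by a subtorus of $\sT\times G$. First I would record the basic geometry of $Y$: it is both $G$- and $\sT$-invariant, its coarse quotient $Y/G$ is $F$, and $\sT$ acts trivially on it. Consequently, for $t\in\sT$ and $y\in Y$ the point $t\cdot y$ lies in the same $G$-orbit as $y$, so every orbit of the combined action of $\sT\times G$ on $Y$ coincides with a $G$-orbit, i.e.\ with a fibre of $Y\to F$. The idea is then to find a subtorus $\widetilde{\sT}\subset \sT\times G$ that projects by an isogeny onto $\sT$ and that stabilizes $Y$ pointwise; taking $\widetilde{\sT}$ as the new torus, with its action on $X$ given by the inclusion $\widetilde{\sT}\hookrightarrow \sT\times G$, solves the problem, because the $G$-component of this inclusion acts trivially on $[X/G]$ and hence the induced action on $[X/G]$ is just the original $\sT$-action pulled back along the isogeny.

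To produce $\widetilde{\sT}$, fix a point $y_0\in Y$ and consider $S:=\operatorname{Stab}_{\sT\times G}(y_0)$. Because the $\sT\times G$-orbit of $y_0$ equals the $G$-orbit $G\cdot y_0$, which has dimension $\dim G$ (the $G$-stabilizer being finite, as $[X/G]$ is Deligne--Mumford), the orbit--stabilizer relation gives $\dim S=\dim\sT$. The intersection $S\cap(\{1\}\times G)$ is contained in the finite group $\operatorname{Stab}_G(y_0)$, so the first projection $\rho\colon S^{\circ}\to\sT$ has finite kernel; since $\dim S^{\circ}=\dim\sT$, its image is a full-dimensional connected subgroup, hence all of $\sT$, and $\rho$ is an isogeny. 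I would then set $\widetilde{\sT}:=S^{\circ}$, a torus which is a multiple cover of $\sT$ via $\rho$, with second projection $\sigma\colon\widetilde{\sT}\to G$ recording the ``twist''. By construction every $s\in\widetilde{\sT}$ fixes $y_0$, and since $G$ is abelian and commutes with $\sT$ the same $s$ fixes all of $G\cdot y_0$.

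The remaining, and main, point is to upgrade triviality on the single orbit $G\cdot y_0$ to triviality on all of $Y$. Here I would show that the identity component of the stabilizer is independent of the chosen point. As computed above, $\dim\operatorname{Stab}_{\sT\times G}(y)=\dim\sT$ for every $y\in Y$, so $y\mapsto \operatorname{Stab}_{\sT\times G}(y)^{\circ}$ is an algebraic family of subtori of fixed dimension in the torus $\sT\times G$. Subtori are rigid---each is determined by a saturated sublattice of the cocharacter lattice, a discrete invariant---so such a family over a connected base is constant. Since $F$ is connected and $Y\to F$ is a $G$-torsor with $G=(\mathbb{C}^*)^{N}$ connected, the total space $Y$ is connected, and therefore $\operatorname{Stab}_{\sT\times G}(y)^{\circ}=S^{\circ}=\widetilde{\sT}$ for all $y\in Y$. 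Thus $\widetilde{\sT}$ fixes every point of $Y$, which is exactly the assertion that the chosen $\widetilde{\sT}$-action on $X$ is trivial on $\pi^{-1}(F)$. The main obstacle is precisely this constancy step: one must rule out the stabilizer subtorus jumping as $y$ varies, which is what rigidity of subtori over the connected base $Y$ provides. The prototype in the preceding Remark, where $\widetilde{\sT}$ is the square-root cover, illustrates why passing to a cover is unavoidable.
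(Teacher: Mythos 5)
Your argument is correct in substance, but it takes a genuinely different route from the paper. The paper offers no abstract proof at all: it asserts that the lemma ``follows immediately from the quotient construction and the description of fixed points,'' i.e.\ from the explicit Quot-scheme presentation of Proposition \ref{prop:global} together with the fixed-point description of Proposition \ref{prop:fp}, from which one can read off, over a given fixed component, exactly which characters of $G=(\mathbb{C}^*)^N$ occur and which $d$-fold covers of invariant $\mathbb{P}^1$'s appear, and hence write down the cover $\widetilde{\sT}\to\sT$ and the corrected lift by hand (the Remark following Proposition \ref{prop:global} is the prototype). You instead prove a general statement about torus actions: for any $y$ lying over the fixed locus, $\operatorname{Stab}_{\sT\times G}(y)$ has dimension $\dim\sT$ (orbit--stabilizer plus finiteness of $G$-stabilizers from the Deligne--Mumford property), its identity component is a subtorus projecting isogenously onto $\sT$, and this subtorus is independent of $y$; taking $\widetilde{\sT}$ to be this stabilizer torus, acting through $\widetilde{\sT}\subset\sT\times G$, does the job. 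Your route buys generality and transparency --- nothing about quasimaps is used, only that $G$ is a torus, $[X/G]$ is DM, and the action is linearizable --- while the paper's route buys a concrete identification of the cover in terms of the combinatorics of Proposition \ref{prop:fp}.

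One step in your write-up needs tightening: the assignment $y\mapsto\operatorname{Stab}_{\sT\times G}(y)^{\circ}$ is not a priori a flat (or even continuous) family of subtori, so rigidity of \emph{families} does not literally apply; conceivably the stabilizer torus could jump among the countably many $r$-dimensional subtori, $r=\dim\sT$, as $y$ varies. The repair uses only ingredients you already have. Since every stabilizer has dimension exactly $r$, two distinct $r$-dimensional subtori $T_1\neq T_2$ cannot both lie in a single $\operatorname{Stab}(y)$: otherwise $\operatorname{Stab}(y)$ would contain the subtorus $T_1\cdot T_2$ of dimension $>r$. Hence the closed subsets $Y^{T_0}=\{y\in Y : T_0\subseteq\operatorname{Stab}_{\sT\times G}(y)\}$, as $T_0$ ranges over $r$-dimensional subtori, are pairwise disjoint and cover $Y$. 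Moreover, the $\sT\times G$-action on the quasi-projective $X$ admits an equivariant ample line bundle (the paper invokes exactly this, via \cite{chriss1997representation}, to build the B-B stratification), so $Y$ embeds equivariantly into the projectivization of a representation, and such a linear torus action has only finitely many distinct stabilizer subgroups. Thus $Y$ is a finite disjoint union of the closed sets $Y^{T_i}$, and connectedness of $Y$ forces a single stratum, which is precisely your constancy claim. With this patch your proof is complete.
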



\subsubsection*{Equivariant Localization}
Equivariant localization in cohomology \cite{atiyah1984moment} also applies 
to orbifolds. The analogous result for equivariant Chow group 
is discussed in \cite{edidin1996equivariant} and the appendix of \cite{graber1997localization}. 
In our setting, this says

\begin{theorem}
For any $\alpha  \in H^*_{\sT}(QM_{rel})$, 
\[ 
    \alpha = \sum_{P \in \textnormal{connected components of }QM_{rel}^\sT} \frac{i_{P*}i_P^*\alpha }{e(N_P)}
\]
where $i_P: P \to QM_{rel}$ is the inclusion and $N_P$ is the tangent bundle of $P$.
This is an equality in $H^*_{\sT}(QM_{rel})_{loc}:= H^*_{\sT}(QM_{rel})\otimes _{R(T)}\mathrm{Frac}R(T)$
\end{theorem}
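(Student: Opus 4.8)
The plan is to reduce the statement to the localization theorems for Deligne--Mumford stacks cited above and to verify their hypotheses in our setting. The inputs I would assemble are: (i) $QM_{rel}$ is a smooth DM stack (the result of Section~\ref{subsec:smoothness}) presented as a global quotient $[X/G]$ with $G=(\mathbb{C}^*)^N$ carrying a commuting $\sT$-action (Proposition~\ref{prop:global}); (ii) each connected component $P$ of $QM_{rel}^\sT$ is a smooth closed substack, so the inclusion $i_P$ is a closed immersion (hence proper), the pushforward $i_{P*}$ is defined, and the normal bundle $N_P$ is a genuine $\sT$-equivariant bundle on $P$; and (iii) the equivariant Euler class $e(N_P)$ is invertible in $H^*_\sT(P)_{loc}$. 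Granting these, the formula is the usual formal consequence of the localization isomorphism.

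First I would establish (iii). Since $P$ is a connected component of the fixed locus, $\sT$ acts on the fibers of $N_P$ with no nonzero trivial summand, so the top equivariant Chern class $e(N_P)$ has as its leading term a nonzero product of tangent weights in $\mathrm{Frac}\,R(\sT)$; hence $e(N_P)$ is invertible after localization. Because we use $\mathbb{C}$ coefficients, this remains true even at orbifold points, where the tangent weights can be fractional multiples of $a_1,\dots,a_n,\epsilon$ (as in the Remark following Proposition~\ref{prop:global}): a fractional but nonzero weight is still an invertible element of $\mathrm{Frac}\,R(\sT)$. The role of the multiple cover $\widetilde{\sT}$ of the Lemma above is then purely to realize $N_P$, on the presentation $X$, as an honest $\widetilde{\sT}$-equivariant bundle with integral weights, so that the Euler class can be computed on $X$ via $H^*_\sT([X/G])\cong H^*_{\sT\times G}(X)$; since a finite isogeny induces an isomorphism on rational equivariant cohomology, this computation returns the correct element of $H^*_\sT(P)_{loc}$.

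With (iii) in place the identity follows formally. I would use three standard facts, all valid for quotient DM stacks by \cite{edidin1996equivariant} and the appendix of \cite{graber1997localization}: the self-intersection formula $i_P^*i_{P*}\beta = e(N_P)\cdot\beta$ for $\beta\in H^*_\sT(P)_{loc}$; the vanishing $i_Q^*i_{P*}=0$ for $P\neq Q$, which holds because this composition factors through restriction to the empty intersection $P\cap Q$; and the concentration theorem stating that $\bigoplus_P i_{P*}\colon \bigoplus_P H^*_\sT(P)_{loc}\to H^*_\sT(QM_{rel})_{loc}$ is an isomorphism. Writing $\alpha=\sum_P i_{P*}\gamma_P$ and applying $i_Q^*$ then gives $i_Q^*\alpha = e(N_Q)\,\gamma_Q$, so $\gamma_Q = i_Q^*\alpha/e(N_Q)$, and substituting back yields $\alpha=\sum_P i_{P*}\!\left(i_P^*\alpha/e(N_P)\right)$, as claimed.

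The main obstacle is not any single estimate but the verification that the cited localization machinery applies cleanly to the quotient stack $[X/G]$ and is compatible with the presentation. Concretely, I expect the delicate points to be: checking the concentration theorem (surjectivity of $\bigoplus_P i_{P*}$ after localization) in the DM-stack setting in the form given in the references above; and matching the normal bundle $N_P$ with a $\widetilde{\sT}$-equivariant bundle on $X$ whose weights reproduce the tangent-weight computation of Propositions~\ref{prop:tangent_add} and~\ref{prop:fp}, so that the Euler class appearing in the formula is the one actually computed on the presentation. Properness of $QM_{rel}$ is not needed for the identity itself, since each $i_P$ is a closed immersion and hence proper.
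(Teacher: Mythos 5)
Your proposal is correct and matches the paper's treatment: the paper offers no independent proof, simply invoking the orbifold version of Atiyah--Bott \cite{atiyah1984moment} together with \cite{edidin1996equivariant} and the appendix of \cite{graber1997localization}, exactly the references you reduce to for the concentration theorem. Your additional verifications (smoothness and the global-quotient presentation, invertibility of $e(N_P)$ despite fractional orbifold weights, and the formal derivation from the self-intersection formula) are accurate elaborations of what the paper leaves implicit, not a different route.
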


For any fixed component $P$ and any element $\alpha \in H^*(P)$, let 
\[ 
    |\alpha  \rangle _{P} := \frac{i_{P, *} \alpha}{e(N_P)} \in H^*_{\sT}(QM_{rel})_{loc}
\]
where $i _{P,*}: P \to QM_{rel}$ is the inclusion. (The division by normal bundle is for convenience.
Under this definition, we have $i_P^* |\alpha  \rangle_P  = \alpha $)

\subsubsection*{Tautological Bundles}
We describe the weights of the tautological bundles $\mathcal{F}_i$ 
defined in Section \ref{subsec:defs} at the fixed components.

Recall that for any fixed component $(\bla, \widetilde{P})$ in $QM_{rel}$, the $\bla$ part is parametrized by a permutation $\sigma \in W$ determined by $ev(\infty ')$ 
and integers $d _{k, \sigma (i)}, k=1,...,n-1, i=1,...,k$ determined by the bundles $\mathcal{V}_i$ restricted to the parametrized $\mathbb{P}^1$. Let 
\[ 
    d_k = \deg \mathcal{V}_k \big|_{\text{parametrized }\mathbb{P}^1} = \sum_{i=1}^{k}d _{k, \sigma (i)}
\] 

\begin{proposition}
For any $k$, the bundle $\mathcal{F}_k| _{(\bla, \widetilde{P})}$ is trivial. Let $\sigma$ and $d _{k}$ be defined as above, then
\[ 
    \mathcal{F}_k| _{(\bla, \widetilde{P})} = -d_k \epsilon  + \sum_{i=1}^k a _{\sigma (i)}
\]
\end{proposition}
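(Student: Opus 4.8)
The plan is to observe that $\mathcal{F}_k$ ignores the bubbles entirely, reduce the computation to a single fixed point of $QM_{ns}$, and then read off the fiber at $0$ directly from the explicit splitting of $\mathcal{V}_k$ recorded there. First I would note that $\mathcal{F}_k = 0^*\mathcal{V}_k$ is the restriction of $\mathcal{V}_k$ along the section $0$, and that $0$ always lands on the parametrized $\mathbb{P}^1$. Hence $\mathcal{F}_k|_{(\bla,\widetilde{P})}$ depends only on the restriction $\mathcal{V}_k|_{\text{parametrized }\mathbb{P}^1}$, which is exactly the datum encoded by the $QM_{ns}$-factor $\bla$ and is independent of the bubble component $\widetilde{P}$. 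In particular the bundle is constant along the $\widetilde{QM}$-directions of the fixed component, so it is equivariantly trivial there; this establishes the triviality assertion and reduces the weight computation to evaluating $0^*\mathcal{V}_k$ at the fixed point $\bla \in QM_{ns,\sigma(x_0)}$.

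Next I would invoke the description of the fixed points of $QM_{ns,\sigma(x_0)}$ recalled above: at $\bla$ one has
\[
    \mathcal{V}_k|_{\text{parametrized }\mathbb{P}^1} = \bigoplus_{i=1}^{k} a_{\sigma(i)}\,\mathcal{O}_{\mathbb{P}^1}(-d_{k,\sigma(i)}),
\]
where the $\sA$-weight $a_{\sigma(i)}$ of the $i$-th summand is pinned down by the evaluation condition $ev(\infty') = \sigma(x_0)$: the fiber of this summand at $\infty$ is the coordinate line $\mathbb{C}\,e_{\sigma(i)}$, of $\sA$-weight $a_{\sigma(i)}$. Thus each summand is a $\sT$-equivariant line bundle on the parametrized $\mathbb{P}^1$ whose weight at $\infty$ is $a_{\sigma(i)}$ and whose degree is $-d_{k,\sigma(i)}$.

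Then I would compute the fiber at $0$ using the standard relation between the two fixed-point fibers of a $\mathbb{C}^*_\epsilon$-equivariant line bundle $L$ on $\mathbb{P}^1$, namely $L|_0 - L|_\infty = (\deg L)\cdot w_0$, where $w_0$ is the $\mathbb{C}^*_\epsilon$-weight of $T_0\mathbb{P}^1$. This relation is intrinsic, since tensoring by a character shifts both fibers equally, and with the domain $\mathbb{P}^1$ oriented so that $\mathbb{C}^*_\epsilon$ scales its coordinate with weight $\epsilon$ one has $w_0 = \epsilon$. Applying it to each summand gives
\[
    \big(a_{\sigma(i)}\,\mathcal{O}(-d_{k,\sigma(i)})\big)\big|_0 = a_{\sigma(i)} - d_{k,\sigma(i)}\,\epsilon,
\]
and summing over $i = 1,\dots,k$ together with $d_k = \sum_{i=1}^{k} d_{k,\sigma(i)}$ yields
\[
    \mathcal{F}_k|_{(\bla,\widetilde{P})} = \sum_{i=1}^{k} a_{\sigma(i)} - d_k\,\epsilon,
\]
as claimed.

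The only real care needed is the bookkeeping of the $\mathbb{C}^*_\epsilon$-equivariant structure: one must fix the weight of $T_0\mathbb{P}^1$ consistently with the conventions in force (so that $w_0 = \epsilon$) and confirm that $a_{\sigma(i)}$ is the weight at $\infty$ rather than at $0$, so that the twist by $\mathcal{O}(-d_{k,\sigma(i)})$ contributes $-d_{k,\sigma(i)}\epsilon$ when transported from $\infty$ to $0$. Once these sign conventions are pinned down the statement is immediate, with no further geometric input, precisely because $\mathcal{F}_k$ is insensitive to the bubbling at $\infty$.
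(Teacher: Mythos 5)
Your proof is correct, and it is precisely the argument the paper leaves implicit: the proposition is stated there without proof as an immediate consequence of the fixed-point description, namely that $\mathcal{F}_k=0^*\mathcal{V}_k$ only sees the parametrized component (hence is constant, i.e.\ trivial, along the $\widetilde{QM}$-factor of the fixed component), together with the weight transport $L|_0 - L|_\infty = (\deg L)\,\epsilon$ applied to each summand $a_{\sigma(i)}\mathcal{O}(-d_{k,\sigma(i)})$. Your sign conventions are consistent with the rest of the paper --- the tangent line at $\infty$ of the parametrized $\mathbb{P}^1$ has weight $-\epsilon$ (so $w_0=+\epsilon$ at $0$), and your formula reproduces the stated action $H_i = a_i + (d_{i-1}-d_i+i)\epsilon$ via $H_i = c_1(\mathcal{F}_i)-c_1(\mathcal{F}_{i-1})+i\epsilon$.
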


\subsection{Bialynicki-Birula decomposition}


Let $\mathbb{C}^* \subset \sT$ be a generic subtorus so that the $\mathbb{C}^*$ fixed points are the same as $\sT$ fixed points.
The Bialynicki-Birula decomposition was initially proved for proper smooth algebraic varieties in \cite{bialynicki1973some}, and generalized to Deligne-Mumford stacks in \cite{alper2020luna}.

\begin{theorem}(\cite{alper2020luna}, Theorem 5.27) 
Let $\{F_i\}_{i \in I}$ denote the connected components of the $\sT$-fixed locus of $QM_{rel}$ and use $F_i^+$ to denote the attracting set of $F_i$ under the $\mathbb{C}^*$ action. Then each $F_i^+$ is an affine fibration over $F_i$ and $QM_{rel}$ is the disjoint union of $F_i^+$.
\end{theorem}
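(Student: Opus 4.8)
The plan is to verify that $QM_{rel}$ satisfies the hypotheses of \cite{alper2020luna}, Theorem 5.27, since the decomposition itself is their general result; the real content here is checking that it applies. That theorem governs a smooth Deligne-Mumford stack equipped with a $\mathbb{G}_m$-action for which the flow has limits, i.e. for which every orbit map $\mathbb{G}_m \to QM_{rel}$ extends across $0 \in \mathbb{A}^1$. I would organize the verification around three inputs: smoothness, the existence of such limits, and a local (slice) model exhibiting the attracting sets as affine fibrations.

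Smoothness is already in hand from the smoothness Proposition, and the generic cocharacter $\mathbb{C}^* \subset \sT$ supplies the $\mathbb{G}_m$-action with $(QM_{rel})^{\mathbb{C}^*} = (QM_{rel})^{\sT}$ by the choice of genericity. The crucial input is the existence of limits, and this is exactly where properness of $QM_{rel}$ enters: as a moduli of stable quasimaps it is proper (see \cite{ciocan2014stable}), so by the valuative criterion the family over $\mathbb{A}^1 \setminus \{0\}$ defined by $t \mapsto t\cdot x$ extends uniquely over $0$. This produces a limit point $\lim_{t\to 0} t\cdot x$ in some fixed component $F_i$ and guarantees that the attracting sets $F_i^+$ cover all of $QM_{rel}$.

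For the affine-fibration and disjointness claims I would pass to a local model. Using the Luna \'etale slice theorem (the main theorem of \cite{alper2020luna}), near a fixed point $p$ with finite stabilizer $G_p$ the stack is \'etale-locally isomorphic to $[T_p QM_{rel}/G_p]$; it is smoothness that lets the slice be the tangent representation. The $\mathbb{C}^*$-action splits $T_p QM_{rel}$ into weight spaces, and the attracting set corresponds to the subspace of non-negative weights, a linear and hence affine fibration over the zero-weight directions. Proposition \ref{prop:tangent_add} describes precisely these weights at a fixed component, so the positive part of the normal bundle $N_{F_i}$ serves as the model fiber, and globalizing gives that $F_i^+ \to F_i$ is an affine fibration, while disjointness of the $F_i^+$ follows from uniqueness of the limit.

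The main obstacle is the Deligne-Mumford/orbifold structure. On a scheme the limit is literally unique, but on a stack one must confirm uniqueness of $\lim_{t\to 0} t\cdot x$ up to the finite automorphisms; this is guaranteed by separatedness of the DM stack together with the valuative criterion, and it is what makes the decomposition honest. A related subtlety, already flagged in the Remark following Proposition \ref{prop:global} and in the Lemma, is that the $\sT$-action on the global quotient presentation $[X/G]$ may act without extracting roots only after replacing $\sT$ by a finite cover $\widetilde{\sT}$; consequently the tangent weights at orbifold fixed points can be fractional, and the attracting affine fibration can carry nontrivial automorphism groups along its fibers. Handling these fractional weights and the gerbe-like structure of the cells carefully, through the slice presentation, is the one place where genuine care beyond the smooth-variety case is needed.
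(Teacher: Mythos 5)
This statement is quoted directly from Alper--Hall--Rydh (their Theorem 5.27), and the paper offers no independent proof; your proposal does exactly what the paper implicitly relies on --- verifying the hypotheses (smoothness from the earlier Proposition, existence of limits from properness of each degree component $QM_{rel}^{\bd}$, genericity of the chosen $\mathbb{C}^* \subset \sT$) and then invoking the general result. Your further sketch of the equivariant slice argument and the stacky caveats (limits only after finite base change, covers $\widetilde{\sT}$, fractional weights) matches the internals of the cited proof, so this is essentially the same approach and is correct.
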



As noted in Remark 5.29 of \cite{alper2020luna}, there
are some subtleties about whether B-B decomposition 
induces a stratification in the case of Deligne-Mumford stacks. In our case, writing the moduli space as a global quotient resolves this issue.

\begin{proposition}
The B-B decomposition of $QM_{rel}$ is a stratification, i.e. there is a partial ordering $\leq $ on the connected components $F_i$ such that 
\begin{equation}
    \label{eqn:closure_order}
    \overline{F_i^+} \subset \bigcup _{j \leq i}F_j^+
\end{equation}
\end{proposition}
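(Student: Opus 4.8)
The plan is to deduce the stratification property from the classical Bialynicki–Birula stratification of a projective scheme, transported through the global quotient presentation $QM_{rel}=[X/G]$ of Proposition \ref{prop:global}; properness of $QM_{rel}$ is what guarantees that the relevant flow limits exist and do not escape the quasi-projective $X$.

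First I would lift the generic subtorus $\mathbb{C}^*\subset\sT$ to $X$. By the lemma above on replacing $\sT$ by a multiple cover $\widetilde{\sT}$ (which alters neither the fixed components nor their attracting sets), the lifted subtorus $\widetilde{\mathbb{C}^*}$ may be chosen so that $\pi^{-1}(F)$ is pointwise fixed for every connected component $F$ of $QM_{rel}^{\sT}$. Writing $\widehat{F}:=\pi^{-1}(F)$, the map $\pi\colon X\to[X/G]$ is a $G$–torsor, so the $\sT$–fixed components, their attracting sets, and the closures thereof on $QM_{rel}$ are precisely the $G$–quotients of the corresponding $(\widetilde{\mathbb{C}^*}\times G)$–invariant subsets of $X$; it therefore suffices to establish the closure relation upstairs. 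Since $X$ is smooth (as $[X/G]$ is smooth and $\pi$ is smooth) and quasi-projective with a $(\widetilde{\mathbb{C}^*}\times G)$–action, Sumihiro's theorem provides an equivariant locally closed embedding $X\hookrightarrow\mathbb{P}(V)$ into the projectivization of a representation $V$; let $\bar X$ be its closure. Because $[X/G]$ is proper and open in the separated stack $[\bar X/G]$, it is also closed there, so $X$ is a union of connected components of $\bar X$. In particular the $\widetilde{\mathbb{C}^*}$–flow limit of any point of $X$ stays in $X$.

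Next I would define the order. With $L:=\mathcal{O}(1)|_{\bar X}$ an ample $\widetilde{\mathbb{C}^*}$–linearized line bundle, let $\mu_i$ be the weight of $\widetilde{\mathbb{C}^*}$ on $L$ along $\widehat{F}_i$ (a rational multiple of the weight for the original $\mathbb{C}^*$), and declare $F_j\le F_i$ if and only if $\mu_j\le\mu_i$. Taking attracting sets via $\lim_{t\to0}t\cdot(-)$, the classical Bialynicki–Birula theory on the projective scheme $\bar X$ tells us that the weight function $p\mapsto\mu\bigl(\lim_{t\to0}t\cdot p\bigr)$ is lower semicontinuous and is constant, equal to $\mu_i$, on each attracting set $\widehat{F}_i^{+}$; a small check on $\mathbb{P}^1$ fixes the orientation so that this is the correct sign convention. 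Lower semicontinuity then yields $\overline{\widehat{F}_i^{+}}\subseteq\bigcup_{\mu_j\le\mu_i}\widehat{F}_j^{+}$.

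Finally I would descend. For $p\in\overline{F_i^{+}}$, choose a lift $\tilde p\in X$; by the clopen property the limit $q:=\lim_{t\to0}t\cdot\tilde p$ exists in $X$ and lies in some $\widehat{F}_j$, whence $p\in F_j^{+}$, and lower semicontinuity forces $\mu_j\le\mu_i$, i.e.\ $F_j\le F_i$. Quotienting the inclusion $\overline{\widehat{F}_i^{+}}\subseteq\bigcup_{\mu_j\le\mu_i}\widehat{F}_j^{+}$ by $G$ then gives exactly \eqref{eqn:closure_order}. I expect the main obstacle to be the two points where the stack structure intervenes: matching the stacky $\sT$–fixed components and their attracting sets with honest torus–fixed loci and attracting sets on $X$ (handled by the cover lemma), and preventing flow limits from leaving the quasi-projective $X$ (handled by the properness of $QM_{rel}$, which makes $X$ clopen in $\bar X$). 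Pinning down the sign of the weight order against \eqref{eqn:closure_order} is a minor but necessary verification.
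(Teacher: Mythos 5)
Your ordering (weights of an equivariant ample line bundle on the fixed components, via the global quotient presentation) is the same one the paper uses, but the mechanism you propose for the closure relation has a fatal flaw. The pivotal claim is that $[X/G]$ is ``proper and open in the \emph{separated} stack $[\bar X/G]$, hence closed, so $X$ is a union of connected components of $\bar X$.'' The stack $[\bar X/G]$ is essentially never separated: once you pass to a projective closure $\bar X$, the torus $G=(\mathbb{C}^*)^N$ acquires non-closed orbits and positive-dimensional stabilizers, and quotient stacks of this kind (e.g.\ $[\mathbb{C}^2/\mathbb{C}^*]$) are non-separated Artin stacks. Without separatedness of the ambient stack, properness of the open substack does not force it to be closed. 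A minimal counterexample to your clopen conclusion: $X=\mathbb{C}^2\setminus\{0\}$ with $G=\mathbb{C}^*$ scaling, so $[X/G]=\mathbb{P}^1$ is proper, yet $X$ is not closed in $\bar X=\mathbb{C}^2$, and for a lift of a $\mathbb{C}^*$-action with strictly positive weights every flow limit escapes $X$. So your key assertion --- that flow limits of points of $X$ stay in $X$ --- is exactly what fails, and with it the reduction to classical Bialynicki--Birula theory and weight semicontinuity on $\bar X$. Whether limits stay in $X$ depends delicately on the choice of lift, which is the whole difficulty the stacky setting creates; on the stack itself the existence of limits is what the cited theorem of Alper--Hall--Rydh already provides, and it cannot be re-derived upstairs this way.

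A second, related gap: you invoke the covering lemma to choose one lift $\widetilde{\mathbb{C}^*}$ making $\pi^{-1}(F)$ pointwise fixed \emph{for every} component $F$ simultaneously, but the lemma is stated (and is only true) one component at a time. Different orbifold components require different fractional twists of the lift by characters into $G$ --- this is precisely the phenomenon in the paper's remark about $QM_{rel}^{d=2}(Gr(1,2))$, where the induced action involves $\sqrt{a}$ --- so in general no single twisted cover fixes all the $\pi^{-1}(F_i)$ at once, and your dictionary ``fixed components, attracting sets, and closures downstairs $=$ $G$-quotients of the corresponding sets upstairs'' is not available for one uniform lift. The paper avoids both problems by never compactifying $X$: it defines the same ample-weight order and proves the closure relation \eqref{eqn:closure_order} directly on the stack by an induction, using that a component $F_j$ in $\overline{F_i^+}$ is either joined to $F_i$ by an invariant $\mathbb{P}^1$ (where ampleness forces the weight inequality) or dominated by an intermediate component $F_k$ in the closure.
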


\begin{proof}
We define the partial ordering using an ample line bundle, c.f. Section 3.2.4 of \cite{maulik2012quantum}.
Write $QM_{rel}$ as $[X/G]$ with a $\sT$ action on $X$. The variety $X$ is quasi-projective, so
by \cite{chriss1997representation} Corollary 5.1.21, there exists a $G \times \sT$-equivariant ample line bundle on $X$. It descends to a $G$-equivariant ample line bundle on $[X/G]$, which we denote by $L$. 
\hide{(I don't know what ample means exactly on DM stacks. What we need here is that on each $\mathbb{P}^1$ the weight decreases if we go along the attracting direction. On weighted $\mathbb{P}^1$, this is implied by the base-point free property.)}
Consider the torus $\mathbb{C}^*$ used to define the B-B decomposition. Define
\[ 
    F_i < F_j \text{ if }\text{weight of }L|_{F_i}<\text{weight of }L|_{F_j}
\]
Under this ordering, if $F_j$ is in the closure $\overline{F_i^+}$, then either $F_i$ and $F_j$ are connected by a $\mathbb{P}^1$, or there exists $F_k$ such that $F_k>F_j$ and $F_k$ is in the closure of $\overline{F_i^+}$. So one can prove inductively that the property \eqref{eqn:closure_order} is satisfied. \hide{One takes a repelling direction of $F_j$ and  sees what it connects to. If it connects to $F_k$ with $k \neq i$, then we can show that this path is also in the closure. This comes from taking a sequence that limits to $F_j$ then scaling them to make their coordinates equal on the repelling direction. It's a little tricky.}
\end{proof}

Using this stratification and exploiting the Gysin exact sequence in cohomology, see e.g. \cite{oprea2006tautological} Section 2, \cite{chriss1997representation} Section 5.5, we have 

\begin{proposition}
\label{prop:eqv_formality}

\begin{enumerate}[label=(\arabic*)]
    \item The natural map 
    \[ 
        A^*_{\sT}(QM_{rel})\to H_\sT^*(QM_{rel})
    \] 
    is an isomorphism.
    \item $H^*_{\sT}(QM_{rel})$ is a free module over $H^*_{\sT}(pt)$ and the classes $\overline{F_i^+}$ for all fixed components $F_i$ form a basis of $H^*_{\sT}(QM_{rel})$ over $H^*_{\sT}(pt)$.
    \item For any subgroup $\sS \subset \sT$, the natural map 
    \[ 
        H^*_{\sT}(QM_{rel})\otimes _{R(\sT)}R(\sS) \to H^*_{\sS}(QM_{rel})
    \]
    is an isomorphism.
\end{enumerate}
\end{proposition}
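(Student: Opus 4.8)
The plan is to deduce all three statements at once from the filtrability of the Bialynicki--Birula stratification established above, by an induction that builds up $QM_{rel}$ one stratum at a time. First I would reduce to a single connected component $QM^{\bd}_{rel}$, which is a smooth (and proper) Deligne--Mumford stack with finitely many $\sT$-fixed components $F_1,\dots,F_m$. Order them compatibly with the partial order $\leq$ of \eqref{eqn:closure_order} and set $Z_k=\bigcup_{i\le k}F_i^+$; by the stratification property each $Z_k$ is a closed substack, $Z_k\setminus Z_{k-1}=F_k^+$ is open in $Z_k$ and is an affine fibration over the fixed component $F_k$, on which $\sT$ acts trivially. I would carry out the whole argument in $\sT$-equivariant Borel--Moore homology $H^{BM,\sT}_*$ (equivalently equivariant Chow, by the comparison cited in the localization subsection) and transfer to $H^*_\sT$ at the end via Poincar\'e duality on the smooth $X=QM^{\bd}_{rel}$; this simultaneously yields the cycle-class comparison of part (1).

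The inductive core uses the localization (right-exact) sequence
\[
    H^{BM,\sT}_*(Z_{k-1}) \xrightarrow{\ \iota_*\ } H^{BM,\sT}_*(Z_k) \xrightarrow{\ r\ } H^{BM,\sT}_*(F_k^+) \to 0 .
\]
Because $F_k^+\to F_k$ is an affine fibration and $\sT$ acts trivially on $F_k$, homotopy invariance gives $H^{BM,\sT}_*(F_k^+)\cong H^{BM,\sT}_{*}(F_k)\cong H^{BM}_*(F_k)\otimes R(\sT)$ (up to the degree shift by the fibre dimension), so the right-hand term is a free $R(\sT)$-module. The fundamental class $[\overline{F_k^+}]\in H^{BM,\sT}_*(Z_k)$ restricts under $r$ to the fundamental class of the open stratum $F_k^+$; more generally the pushforwards to $\overline{F_k^+}$ of a basis of $H^*(F_k)$ restrict to an $R(\sT)$-basis of $H^{BM,\sT}_*(F_k^+)$. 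This produces an $R(\sT)$-linear splitting of $r$, so that once $\iota_*$ is known to be injective the sequence is short exact and split, and $H^{BM,\sT}_*(Z_k)$ is free with basis the classes attached to $F_1^+,\dots,F_k^+$.

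The main obstacle is precisely the injectivity of $\iota_*$ (equivalently the vanishing of the connecting maps, so that the long exact sequence breaks into short exact pieces). I would settle this by equivariant localization. The $\sT$-fixed locus of $Z_k$ is exactly $\bigsqcup_{i\le k}F_i$, since on the attracting directions of each $F_i^+$ the chosen generic $\mathbb{C}^*\subset\sT$ acts with strictly positive weights; hence after inverting the nonzero characters the restriction-to-fixed-locus isomorphism of Edidin--Graber identifies $H^{BM,\sT}_*(Z_k)_{loc}\cong\bigoplus_{i\le k}H^{BM,\sT}_*(F_i)_{loc}$, and the localized $\iota_*$ becomes the inclusion of the summands with $i\le k-1$, which is injective. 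Since by the inductive hypothesis $H^{BM,\sT}_*(Z_{k-1})$ is free, it is torsion-free and injects into its localization; combined with the injectivity of the localized map this forces $\iota_*$ itself to be injective. This closes the induction and gives part (2). For part (1), I would run the identical induction with the cycle-class map from Chow to Borel--Moore homology; it is compatible with the localization sequences, the affine-fibration isomorphisms and the splittings, and it carries the Chow basis to the Borel--Moore basis, hence is an isomorphism at each stage (this is where one uses that the fixed components $F_i$, being themselves affine-paved quasimap-type strata, satisfy $A^*\cong H^*$).

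Finally, for part (3) I would not re-run the stratification but instead invoke Leray--Hirsch. Freeness of $H^*_\sT(X)$ over $R(\sT)$ with the cycle-class basis $\{[\overline{F_i^+}]\}$ is equivariant formality, so the non-equivariant reductions of these classes form a $\mathbb{C}$-basis of $H^*(X)$. Viewing $H^*_\sS(X)$ through the fibration $X\hookrightarrow X_\sS\to B\sS$, the classes $[\overline{F_i^+}]$, regarded as $\sS$-equivariant classes, restrict to this basis of the fibre $H^*(X)$, so Leray--Hirsch makes $H^*_\sS(X)$ free over $R(\sS)$ with the same basis. The natural base-change map then sends the $R(\sS)$-basis $\{[\overline{F_i^+}]\otimes 1\}$ to $\{[\overline{F_i^+}]\}$ and is therefore an isomorphism.
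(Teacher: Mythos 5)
Your proposal is correct and is essentially the paper's own argument: the paper proves this proposition precisely by invoking the Bialynicki--Birula stratification together with the Gysin/localization exact sequences (citing Oprea, Section 2, and Chriss--Ginzburg, Section 5.5, for the standard induction over strata that you spell out). Your write-up just makes explicit the details the paper leaves to those references -- the splitting of the sequences via freeness and localization, the five-lemma comparison of Chow with Borel--Moore homology, and the Leray--Hirsch base change -- including the correct refinement that the basis is indexed by strata together with bases of $H^*(F_i)$ when fixed components are positive-dimensional.
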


\section{Action of the Universal Enveloping Algebra of $\mathfrak{gl}_n$}\label{sec:action}

\subsection{Action on $H^*_\sT(QM_{ns})$}

First, we recall the construction in \cite{feigin2011gelfand} of the $U(\mathfrak{gl}_n)$ action on $H^*_\sT(QM_{ns})$. 

For our purpose, it's convenient to start from the homogenized enveloping algebra $U'(\mathfrak{gl}_n)$: it is an algebra over $\mathbb{C}[\epsilon ]$ with generators $E_i, F_i$ for $i=1,...,n-1$ and $H_i$ for $i=1,...,n$, satisfying the relations
\begin{equation}
    \label{eqn:gln_relations}
\begin{split}
    [E_i, F_i] &= \epsilon (H _{i+1}-H_i) \\
    [H_i, E_i] = -\epsilon E_i, &\;\; [H_{i+1}, E_i] = \epsilon E_i \\
    [H_i, F_i] = \epsilon F_i, &\;\; [H_{i+1}, F_i] =  -\epsilon F_i
\end{split}
\end{equation}
and the usual Serre relations. The $\epsilon $ here will correspond to the $\epsilon $ in equivariant variables. Specializing it to any non-zero complex number gives the usual $U(\mathfrak{gl}_n)$ (after dividing all generators by $\epsilon $).
These generators are realized geometrically as follows.

Let ${C}_{ns,i} ^{\bd }$ be the moduli space of flags of locally free sheaves
\begin{equation}\label{eqn:flag_for_corr}
    \mathcal{V}_1 \to \mathcal{V}_2 \to \cdots \mathcal{V}_{i}' \to \mathcal{V}_i \to \cdots  \to \mathcal{V}_n \simeq \mathcal{O}_{\mathbb{P}^1}^{\oplus n}
\end{equation}
such that 
\begin{itemize}
    \item $\text{rk}\;\mathcal{V}_k=k$ for $k=1,...,n-1$. $\text{rk}\;\mathcal{V}_i' = i$.
    \item Each map is an inclusion of sheaves
    \item $\mathcal{V}_i / \mathcal{V}_i' = \mathcal{O}_0$, where $\mathcal{O}_0$ denotes the skyscraper sheaf supported at $0 \in \mathbb{P}^1$.
\end{itemize}


It has two natural projections, to $QM_{ns}^{\bd } $ and $QM_{ns}^{\bd + \delta _i}$, denoted by $p$ and $q$. 
The action of $E_i, F_i$ are given by
\begin{equation}\label{eqn:ef_act}
    E_i = -q_*p^*, \;\; F_i = p_*q^*
\end{equation}
The action of Cartan elements $H_i, i=1,...,n$ are given by
\begin{equation}
    H_i = a_i + (d_{i-1} - d_i +i)\epsilon 
\end{equation}

\subsection{Action on $H^*_{\sT}(QM_{rel})$}

Now we turn to relative quasimaps.
To ease the notation, we will first consider $H^*_{\sT}(QM_{rel}):= H^*_{\sT}(QM_{rel, x_0})$ in this and next section. But the discussion also applies to $H^*_{\sT}(QM_{rel, w(x_0)})$ for any $w \in W$.

Note that the discussion in the previous section is for $QM_{ns,x_0}$. By symmetry, this construction also
applies to $QM_{ns, ev(\infty )=w(x_0)}$ for any $w \in W$, as long as we replace the $H_i$ action by 
\begin{equation}
    H_i = a_{w(i)} + (d_{i-1} - d_i +i)\epsilon 
\end{equation}

A more intrinsic way of writing this is to use the 
tautological bundles, namely
\begin{equation}
    H_i = c_1(\mathcal{F}_i) - c_1(\mathcal{F}_{i-1})+ i \epsilon 
\end{equation}
($c_1(\mathcal{F}_n) = c_1(\mathcal{F}_0) = 0$.)

The action on $H^*_{\sT}(QM_{rel})$ can be defined entirely analogous to $H^*_{\sT}(QM_{ns})$. For this, define the correspondences $C ^{\bd }_i$ in the same way as in ${C}_{ns,i} ^{\bd }$ using \eqref{eqn:flag_for_corr}, except that the bundles $\mathcal{V}_i$ are over (possibly) a chain of $\mathbb{P}^1$, with the stability condition as in the definition of $QM_{rel}$. As before, we define 
\begin{equation}\label{eqn:ef_act_rel}
    E_i = -q_*p^*, \;\; F_i = p_*q^*
\end{equation}
\begin{equation}\label{eqn:H_act}
    H_i = c_1(\mathcal{F}_i) - c_1(\mathcal{F}_{i-1})+ i \epsilon 
\end{equation}

\begin{theorem}
The $E_i, F_i, H_i$ above satisfy the relations in \eqref{eqn:gln_relations} and thus give rise 
to an action of $U(\mathfrak{gl}_n)$ on $H^*_{\sT}(QM_{rel})$.
\end{theorem}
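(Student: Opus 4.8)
The plan is to verify the relations \eqref{eqn:gln_relations} as identities of $R(\sT)$-linear operators on $H^*_{\sT}(QM_{rel})$ by reducing to the already-known $QM_{ns}$ case of \cite{feigin2011gelfand,braverman2005finite}. Since $H^*_{\sT}(QM_{rel})$ is a free $R(\sT)$-module (Proposition \ref{prop:eqv_formality}(2)), it is torsion-free, so the localization map into $H^*_{\sT}(QM_{rel})_{loc}$ is injective. It therefore suffices to prove each relation after localization, i.e.\ as an identity of matrices in the fixed-point basis $\{\,|\alpha\rangle_P\,\}$ indexed by the connected fixed components $P$. The crucial geometric observation is that the correspondences $C_i^{\bd}$ defining $E_i,F_i$ in \eqref{eqn:ef_act_rel} modify the flag of sheaves only at the point $0\in\mathbb{P}^1$, which always lies on the \emph{parametrized} component; the bubbling and the relative/stability condition take place at $\infty$. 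Hence the correspondence acts only through the parametrized $\mathbb{P}^1$ and leaves the bubble part untouched.

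First I would make this precise on fixed loci. By Proposition \ref{prop:fp} every fixed component of $QM_{rel}$ has the product form $(\bla,\widetilde P)$, with $\bla$ a fixed point of $QM_{ns,w(x_0)}$ and $\widetilde P$ a fixed component of $\widetilde{QM}$. The same analysis applies to $C_i^{\bd}$: since the modification is at $0$, the fixed components of the correspondence split as (a fixed point of the $QM_{ns}$ correspondence on the parametrized side) paired with the \emph{same} bubble component $\widetilde P$. Computing $E_i|\alpha\rangle_P$ and $F_i|\alpha\rangle_P$ by push-pull in localized cohomology, the matrix coefficient between $(\bla,\widetilde P)$ and $(\bla',\widetilde P')$ vanishes unless $\widetilde P=\widetilde P'$. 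When $\widetilde P=\widetilde P'$, Proposition \ref{prop:tangent_add} gives $N_{(\bla,\widetilde P)}=N^{QM_{ns}}_{\bla}\boxplus(N^{\widetilde{QM}}_{\widetilde P}\oplus\psi')$, and the spectator factors $e(N^{\widetilde{QM}}_{\widetilde P}\oplus\psi')$ occur identically in the Euler classes of source and target, so they cancel and the coefficient equals exactly the $QM_{ns}$ coefficient between $\bla$ and $\bla'$. For the Cartan operators, the description of $\mathcal{F}_k$ at a fixed component shows $\mathcal{F}_k|_{(\bla,\widetilde P)}=-d_k\epsilon+\sum_{i=1}^k a_{\sigma(i)}$ depends only on the parametrized data $\bla$, so $H_i$ in \eqref{eqn:H_act} is diagonal with the same eigenvalue $a_{w(i)}+(d_{i-1}-d_i+i)\epsilon$ as in the $QM_{ns}$ model and is independent of $\widetilde P$.

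Consequently, in the fixed-point basis the three operators $E_i,F_i,H_i$ on $H^*_{\sT}(QM_{rel})$ are block operators that act as the corresponding $QM_{ns}$ operators on the $\bla$-index and as the identity on the $\widetilde P$-index (the cohomology of the bubble component). I would then conclude by transport: the commutator $[E_i,F_i]$, the Cartan commutators $[H_i,E_i]=-\epsilon E_i$ and $[H_{i+1},E_i]=\epsilon E_i$ (and the $F_i$ analogues), the relations $[E_i,F_j]=0$ for $i\neq j$, and the Serre relations all hold for the $QM_{ns}$ operators, and each is an identity among operators touching only the $\bla$-index; tensoring with the identity on the untouched bubble factor, every relation transfers verbatim. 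The Cartan relations in particular reduce to the same degree bookkeeping as in $QM_{ns}$: $E_i$ raises $\bd$ by $\delta_i$, changing the $H_i$-eigenvalue by $-\epsilon$ and the $H_{i+1}$-eigenvalue by $+\epsilon$.

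The main obstacle is justifying that the bubble part is a genuine spectator, which amounts to two checks. One must verify that both projections $p,q$ of $C_i^{\bd}$ preserve the bubble structure, so that the fixed loci of $C_i^{\bd}$ are compatible with those of $QM_{rel}$ on \emph{both} $QM_{rel}^{\bd}$ and $QM_{rel}^{\bd+\delta_i}$ and the push-pull localization formula applies with matching spectator Euler factors; this uses that node-smoothing and the degree constraints cutting out $Q_3$ in Proposition \ref{prop:global} never interact with the modification at $0$. One must also confirm that $E_i,F_i$ are well-defined on non-localized $H^*_{\sT}$, i.e.\ that the Gysin pushforwards exist: this follows from smoothness of $QM_{rel}$ (the earlier Proposition) together with properness of the projections. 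Finally, because fixed components can be non-isolated when $w_i=w_{i+1}$, the identities must be read as equalities of maps $H^*(F)\to H^*(F')$ rather than scalars; this causes no difficulty precisely because $E_i,F_i,H_i$ act trivially on the bubble cohomology factor $H^*(\widetilde P)$, so the block structure and hence the relations persist.
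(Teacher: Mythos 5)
Your proof is correct and follows essentially the same route as the paper's: reduce to the fixed-point basis via localization, use the product structure $(\bla,\widetilde P)$ of the fixed components together with the tangent-space splitting of Proposition \ref{prop:tangent_add} to show that the bubble part is a spectator, and then transfer the relations from the known $QM_{ns}$ case of \cite{feigin2011gelfand}. Your extra justifications (injectivity of the localization map via freeness from Proposition \ref{prop:eqv_formality}, properness of the projections, and the handling of non-isolated fixed components) simply make explicit steps that the paper leaves implicit.
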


\begin{proof}
As discussed in Section \ref{subsec:fixed_locus}, each fixed point in $QM_{rel}^{\bd }$ is labeled by $(\bla, \widetilde{P})$ where $\bla$ is a fixed point in $QM _{ns}^{\bd _1}$
and $\widetilde{P}$ is a fixed component in $\widetilde{QM}^{\bd _2}$ with $\bd _1 + \bd _2 = \bd $. 
The main result of \cite{feigin2011gelfand} shows that the relations are satisfied for fixed points
with $\bd _2 = 0$ (i.e. for $QM_{ns}$). To show this for other fixed components, note that by Proposition \ref{prop:tangent_add}, we have
\[ 
     E_i |\alpha  \rangle _{(\bla, \widetilde{f})} = \sum_{|\bmu| - |\bla|= \delta _i} c _{\bla \bmu} |\alpha  \rangle _{(\bmu, \widetilde{f})}
\]
where $c _{\bla \bmu}$ is equal to the coefficient of $|\bmu \rangle $ in $E_i|\bla \rangle $ in $QM_{ns, w(x_0)}^{\bd _1}$. 
(And similarly for $F_i$.) The weights of $\mathcal{F}_i$'s change with $ev(\infty' )$, and this makes the action of $H_i$ exactly match the action of 
$H_i$ on $QM _{ns, w(x_0)}^{\bd _1}$. 
Thus, the relations \eqref{eqn:gln_relations} are satisfied.

\end{proof}

\begin{remark}
Note that the $\mathcal{F}_i$'s are non-trivial vector bundles in general, so 
the action of the $H_i$'s may be non-semisimple after specializing equivariant parameters.
This already happens in the simplest example of $n=2$. We will come back to this point 
in Section \ref{subsec:catO}.
\end{remark}

\begin{remark}\label{rmk:uq_action}
\cite{braverman2005finite} considered the same correspondences 
on equivariant K-theory of $QM_{ns}$ and they form the quantum group $U_q(\mathfrak{gl}_n)$. To construct a $U_q(\mathfrak{gl}_n)$
action on $K_\sT(QM_{rel})$, one needs to be careful to avoid square roots of equivariant variables, as the line bundles $\det(\mathcal{F}_i)$ may not have a square root.
One way to do this is that, instead of defining the action of the 
Drinfeld-Jimbo generators $e_i, f_i, i=1,...,n-1$ and  $ \psi _j, j=1,..n$ (which will involve square roots), one defines the action of $E_i = \psi _{i+1}e_i, F_i = f_i \psi _i$ and $\Phi _j = \psi _j^2$.
\end{remark}

\begin{remark}\label{rmk:bigger_alg}
A bigger algebra action (Yangian $Y(\mathfrak{gl}_n)$ in the case of cohomology and quantum affine algebra $U_q(\widehat{\mathfrak{gl}}_n)$ in the case of K-theory) can be constructed as in \cite{feigin2011yangians}, \cite{tsymbaliuk2010quantum}, \cite{neguct2018affine} by further twisting the correspondences by the tautological line bundle. It's not hard to see that these actions 
also extends to $H^*_{\sT}(QM_{rel})$ and $K_{\sT}(QM_{rel})$ by analyzing the weights at fixed locus similar to the proof above.
\end{remark}

\section{Specializing to Regular Lowest Weight}
\label{sec:spec_lowest_wt}
Fix $\lambda = (\lambda_1,...,\lambda_n)$ where $\lambda_1 > \lambda_2 > ... > \lambda_n$ are integers.
(We can get other permutations of $\lambda$ by letting $ev(\infty )=w(x_0)$ for different $w \in W$. So we fix the $\lambda _i$'s to be in decreasing order.)
Fix a generic complex number $\epsilon _0$ and fix $w \in W$.
Consider the $U(\mathfrak{gl}_n)$ representation
\[ 
    H^*_{\sT}(QM_{ns, w(x_0)})\otimes _{R(\sT)} \mathbb{C}_\lambda
\]
where $\mathbb{C}_\lambda$ becomes a module over $R(\sT)$ by 
\begin{equation}\label{eqn:spec_wt}
    a_i \mapsto \lambda_i \epsilon, \epsilon \mapsto \epsilon _0
\end{equation}

\begin{theorem} (\cite{feigin2011gelfand}, Theorem 3.5)
The module $H^*_{\sT}(QM_{ns, w(x_0)})\otimes _{R(\sT)} \mathbb{C}_\lambda$ is isomorphic     
to the dual Verma module of $U(\mathfrak{gl}_n)$ with lowest weight $w(\lambda)-\rho $.
\end{theorem}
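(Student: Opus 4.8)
The plan is to identify the specialized module with the dual Verma module in three stages: produce a weight basis, match characters, and then pin down the precise module structure (dual Verma rather than Verma). Throughout I take $\epsilon_0$ nonzero, so that after specializing $\epsilon\mapsto\epsilon_0$ the homogenized algebra $U'(\mathfrak{gl}_n)$ becomes the ordinary $U(\mathfrak{gl}_n)$.

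First I would pass to the localized module and record its weight grading. The analogue for $QM_{ns,w(x_0)}$ of Proposition \ref{prop:eqv_formality} shows that $H^*_\sT(QM_{ns,w(x_0)})$ is a free $R(\sT)$-module whose equivariant fixed-point classes form a basis; after applying $-\otimes_{R(\sT)}\mathbb{C}_\lambda$ as in \eqref{eqn:spec_wt} these descend to a $\mathbb{C}$-basis indexed by the $\sT$-fixed points of Section \ref{subsec:fixed_locus}, i.e. by tuples $d_{i,w(j)}$ subject to the nesting inequalities. Since each $H_i$ acts by the scalar \eqref{eqn:H_act}, which on a fixed point evaluates to $(\lambda_{w(i)}+d_{i-1}-d_i+i)\epsilon_0$, every fixed-point class is simultaneously a weight vector, so the module is the direct sum of its weight spaces and the Cartan action is completely explicit. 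The degree-zero quasimap, the constant map to $w(x_0)$, is a single reduced point; its class $v_0$ has weight $w(\lambda)-\rho$, and because $F_i=p_*q^*$ strictly lowers a degree $d_i$ that is already minimal, the $F_i$ annihilate $v_0$, so $v_0$ is the extremal (lowest, in the paper's convention) weight vector of weight $w(\lambda)-\rho$.

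Next I would match characters. Reading off \eqref{eqn:H_act}, increasing a single degree $d_k$ by one shifts the weight by $-\alpha_k$, so the weights occurring are exactly $w(\lambda)-\rho$ shifted by non-negative integer combinations of the simple roots, the multiplicity of the shift $\beta=\sum_k d_k\alpha_k$ being the number of fixed points with $\deg\mathcal{V}_k=d_k$ for all $k$. It therefore remains to verify the purely combinatorial identity that this number equals the value $\mathcal{P}(\beta)$ of the Kostant partition function; this is exactly the statement that the nesting data $d_{i,w(j)}$ are in bijection with Gelfand--Tsetlin patterns of the appropriate shape, and it shows that the graded character of our module coincides with that of the Verma module $M(w(\lambda)-\rho)$, equivalently with that of its dual.

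Finally, and this is where the genuine content lies, I would distinguish the dual Verma module from the ordinary Verma module: they share the character just matched, so the identification must use the module structure. Conceptually the dual Verma appears because $QM_{ns}$ is smooth but \emph{not} proper, so that the relevant invariant is ordinary cohomology $H^*$ (contravariant, built from pullbacks) rather than Borel--Moore homology; this makes the module co-cyclic, cogenerated by the functional dual to $v_0$, which is precisely the characterization of a dual Verma module, whereas Borel--Moore homology would produce the cyclic Verma. To turn this heuristic into a proof I would either (i) show directly that $v_0$ fails to generate the module while its dual functional cogenerates it, or (ii) following \cite{feigin2011gelfand}, compute by localization the matrix coefficients of $E_i=-q_*p^*$ and $F_i=p_*q^*$ from \eqref{eqn:ef_act} in the fixed-point basis and recognize them as the Gelfand--Tsetlin formulas presenting the dual Verma module. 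I expect route (ii) to be cleanest, but it is also the main obstacle: the localization on the correspondence $C_{ns,i}^{\bd}$ requires the tangent weights at the fixed points and careful bookkeeping of the Cartan twists in \eqref{eqn:H_act}, and matching the resulting formulas to the standard dual Verma action with exactly the right normalization is the delicate step.
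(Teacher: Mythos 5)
A preliminary remark: the paper contains no proof of this statement at all --- it is imported verbatim as Theorem 3.5 of \cite{feigin2011gelfand} --- so the only meaningful comparison is with the argument in that reference, and your ``route (ii)'' is indeed a correct reconstruction of its strategy. Your first two stages are sound and essentially complete: equivariant formality gives a basis of fixed-point classes after the specialization \eqref{eqn:spec_wt}; on the degree-$\bd$ summand each $H_i$ acts by the scalar $a_{w(i)}+(d_{i-1}-d_i+i)\epsilon$, so the degree grading is the weight grading; the constant quasimap gives a vector $v_0$ of weight $w(\lambda)-\rho$ killed by all $F_i$; and the count of fixed points of degree $\bd$ by the Kostant partition function matches the Verma (equivalently dual Verma) character.

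The genuine gap is that stage 3, which you yourself identify as ``where the genuine content lies,'' is never executed, and it is the entire theorem: for regular integral $\lambda$ and general $w$ the modules $M(w(\lambda)-\rho)$ and $M(w(\lambda)-\rho)^\vee$ have the same character and both have an $F$-singular vector of weight $w(\lambda)-\rho$, so nothing you have established so far distinguishes them. Deferring the localization computation of the matrix coefficients of $E_i=-q_*p^*$ and $F_i=p_*q^*$ in the fixed-point basis, and their identification with the Gelfand--Tsetlin formulas presenting the dual Verma module, is deferring the whole proof --- that computation is precisely what \cite{feigin2011gelfand} carry out. Moreover, your fallback route (i) rests on a heuristic that is wrong as stated: since $QM_{ns}$ is smooth, Poincar\'e duality identifies $H^{BM}_*(QM_{ns})$ with $H^*(QM_{ns})$ compatibly with the correspondence operators (both projections from $C^{\bd}_{ns,i}$ are proper), so Borel--Moore homology carries the \emph{same} module, not the Verma; the module dual to $H^*$ is compactly supported cohomology (equivalently ordinary homology), and it is that pairing which exchanges Verma and dual Verma. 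So route (i) cannot be repaired without in effect redoing route (ii), i.e.\ without computing the action in the fixed-point basis and checking which of the two structures actually appears.
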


It's natural to ask what we get if we do the same thing for $QM_{rel, w(x_0)}$. 
Instead of doing this for the whole $H^*_{\sT}(QM_{rel, w(x_0)})$, we will first pick out a submodule
depending on the lowest weight we are considering.

In the rest of this section, we omit the $w(x_0)$ and simply write $QM_{rel}$ for brevity.
As vector spaces, 
\[ 
    H^*_{\sT}(QM_{rel})\otimes _{R(\sT)} \mathbb{C}_\lambda \simeq H^*\left(QM_{rel}^{\mathbb{C}^*_\lambda}\right)
\]
where we use $\mathbb{C}^*_\lambda$ to denote the subtorus of $\sT$ whose Lie algebra spans the subspace 
\[ 
    a_i = \lambda_i \epsilon 
\]
in $\text{Lie}(\sT)$. Furthermore,
the correspondences $C_i ^{\bd }$ can also be replaced by their $\mathbb{C}^*_\lambda$ fixed points
with suitable twists by normal bundles. (Cf. \cite{chriss1997representation} Section 5.5 and 5.11)


For each $\bd $, the connected components of $(QM_{rel}^{\bd })^{\mathbb{C}^*_\lambda}$ can be divided into two groups: the components that intersect $QM _{ns}$ and the components that do not.
Let 
\[ 
    I_0 := \{L \in  \text{connected components of }(QM_{rel}^{ })^{\mathbb{C}^*_\lambda}| L \cap QM_{ns} \neq \emptyset\}
\]
and $I_1$ be its complement.
Then
\begin{equation}
    \label{eqn:Hp}
    H_\lambda := \bigoplus_{L \in I_0} H^*(L) \text{  and  } H_\lambda' := \bigoplus_{L \in I_1} H^*(L)
\end{equation} 
each form a submodule of $(QM_{rel}^{\bd })^{\mathbb{C}^*_\lambda}$,
since the correspondences preserve $QM_{ns}$.

Let $\CM_0 := \coprod _{L \in I_0} L$.

\begin{lemma}\label{lem:fp_in_closure}
A $\sT$ fixed point $(\bla, \widetilde{f})$ is contained in $\CM_0$ if and only if the $\mathbb{P}^1$'s in the domain of $\widetilde{f}$ each covers an invariant $\mathbb{P}^1$ in the target, and the weight of each $\mathbb{P}^1$ (see Remark \ref{rmk:p1weight}) is equal to $\epsilon $ under the specialization \eqref{eqn:spec_wt}. 
\end{lemma}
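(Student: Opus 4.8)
The plan is to convert membership in $\CM_0$ into a purely local smoothability statement at the fixed point $(\bla,\widetilde f)$, and then read off the weight condition directly from Proposition \ref{prop:tangent_add}.

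First I would record a structural observation about the fixed locus. Since $QM_{rel}$ is a smooth Deligne--Mumford stack and $\mathbb{C}^*_\lambda\subset\sT$ acts on it, each connected component $L$ of $(QM_{rel})^{\mathbb{C}^*_\lambda}$ is smooth, hence smooth and connected, hence irreducible. Recall that inside $QM_{rel}$ the subspace $QM_{ns}$ is exactly the open locus where the domain is irreducible (no bubbles). Therefore $L\cap QM_{ns}$ is open in $L$, and by irreducibility it is nonempty if and only if it is dense. Writing $L$ for the component containing $(\bla,\widetilde f)$, this gives
\[
  (\bla,\widetilde f)\in\CM_0 \iff \text{the generic point of } L \text{ has irreducible domain.}
\]

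Next I would express ``irreducible domain at the generic point'' as simultaneous smoothability of all nodes within $L$. Using the global quotient presentation $QM_{rel}^{\bd}\cong[Q_3/(\mathbb{C}^*)^N]$ of Proposition \ref{prop:global}, the $k$-th node is cut out by a coordinate divisor of the base $\mathbb{C}^N$, and smoothing it amounts to moving off that divisor; this presentation is also what keeps the weight bookkeeping below honest at orbifold points. Hence the generic point of $L$ has irreducible domain iff $L$ is contained in none of these node divisors, which, since $L$ is smooth with $(\bla,\widetilde f)$ a $\sT$-fixed point on it, is equivalent to the statement that for every node $r_k$ the corresponding node-smoothing tangent direction lies in $T_{(\bla,\widetilde f)}L=\bigl(T_{(\bla,\widetilde f)}QM_{rel}\bigr)^{\mathbb{C}^*_\lambda}$. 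In short, $(\bla,\widetilde f)\in\CM_0$ iff every node-smoothing direction is $\mathbb{C}^*_\lambda$-fixed.

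It then remains to compute the $\sT$-weight of each node-smoothing direction. By Proposition \ref{prop:tangent_add} the smoothing of a node is the tensor product of the two tangent lines of the adjacent components, so its weight is the sum of the two tangent weights at that node. When a bubble covers an invariant $\mathbb{P}^1$ (Proposition \ref{prop:fp}, part \ref{enum:fp4}), its tangent weights at the two nodes are $\pm\omega_k$, where $\omega_k$ is the weight of that bubble in the sense of Remark \ref{rmk:p1weight}, namely the target edge weight divided by the covering degree $d_k$ (a pure $\sA$-weight). The parametrized $\mathbb{P}^1$ contributes at $\infty'$ the pure weight $-\epsilon$. Orienting the $\omega_k$ consistently along the chain, the smoothing weight at the first node $r_1=\infty'$ is $\omega_1-\epsilon$, while at an interior node $r_k$ (for $k\geq 2$) it is $\omega_k-\omega_{k-1}$. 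Restricting to $\mathbb{C}^*_\lambda$ via the specialization \eqref{eqn:spec_wt}, $\mathbb{C}^*_\lambda$-triviality of all of these reads $\omega_1=\epsilon$ together with $\omega_k=\omega_{k-1}$ for all $k$, and this telescopes to $\omega_1=\cdots=\omega_N=\epsilon$ under \eqref{eqn:spec_wt}, which is precisely the asserted weight condition.

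Finally I would rule out the case where some bubble fails to cover an invariant $\mathbb{P}^1$, i.e. is a constant bubble in the sense of Proposition \ref{prop:fp}, part (5). There the component $L$ is positive-dimensional precisely because the base points of the quasimap may move along that bubble, but this motion does not smooth the adjacent node; the generic point of $L$ still carries that bubble, so $L\cap QM_{ns}=\emptyset$ and $(\bla,\widetilde f)\notin\CM_0$. This shows that covering an invariant $\mathbb{P}^1$ is necessary, and together with the weight computation gives both implications. I expect the main obstacle to be making the second step fully rigorous on a Deligne--Mumford stack: one must verify that the node loci really are honest coordinate divisors meeting the smooth component $L$ in the expected way, so that ``the generic point of $L$ has no bubbles'' is genuinely detected by the $\mathbb{C}^*_\lambda$-fixed tangent directions at the single point $(\bla,\widetilde f)$, and that the orbifold structure does not distort the weight count. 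Both of these are exactly what the global quotient presentation of Proposition \ref{prop:global} is designed to control.
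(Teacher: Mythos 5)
Your proposal follows essentially the same route as the paper's proof: your steps 1--3 (smoothness, hence irreducibility, of the $\mathbb{C}^*_\lambda$-fixed components; the reduction of membership in $\CM_0$ to $\mathbb{C}^*_\lambda$-triviality of the node-smoothing directions; and the telescoping computation $\omega_1=\epsilon$, $\omega_k=\omega_{k-1}$) are exactly the paper's argument, just written with more justification for the middle step.

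The deviation is your final step on constant bubbles, and that is where there is a genuine gap. You justify $L\cap QM_{ns}=\emptyset$ by asserting that the motion of base points ``does not smooth the adjacent node,'' so that ``the generic point of $L$ still carries that bubble.'' That assertion fails in general: since stability forces the base points of a constant bubble to avoid the nodes and the marked point, the compensating automorphisms of a $\sT$-fixed constant bubble must preserve a finite nonempty set away from its special points and are therefore trivial, so $\sT$ acts with weight $0$ on both tangent lines of such a bubble at its nodes. Consequently a node between \emph{two adjacent constant bubbles} has smoothing weight $0+0=0$, is $\mathbb{C}^*_\lambda$-fixed, and \emph{can} be smoothed inside $L$: the two bubbles merge into a single constant bubble with a larger base divisor, so the generic point of $L$ need not carry ``that bubble.'' (Also, $L$ can be zero-dimensional in this case, since bubble automorphisms can absorb the motion of the base points.) The conclusion is nevertheless correct, and the clean repair is to stay inside your own step-2 criterion rather than argue separately: feed the weight $0$ of constant-bubble tangent lines into the node-by-node computation, starting from the parametrized component. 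At the first bubble along the chain that is either constant or a cover of weight $\neq\epsilon$, the node on its left has smoothing weight $-\epsilon+0$, or $-\omega_{k-1}+0=-\epsilon$, or $-\omega_{k-1}+\omega_k\neq 0$, all nontrivial under \eqref{eqn:spec_wt} because $\epsilon_0\neq 0$ and the $\lambda_i$ are distinct; hence the criterion fails and the point is not in $\CM_0$. This treats constant bubbles and wrong-weight covers uniformly, which is implicitly how the paper's one-sentence induction (``the tangent weight of the parametrized $\mathbb{P}^1$ at the first node is $-\epsilon$, so all $\mathbb{P}^1$'s in the bubble need to have weight $\epsilon$'') disposes of both cases at once.
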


\begin{proof}
The fixed locus $(QM_{rel}^{\bd })^{\mathbb{C}^*_\lambda}$ is smooth and irreducible. \footnote{Note that we are considering the action of a $\mathbb{C}^*$ here. This may fail if we consider a \textit{finite} group acting on a Deligne-Mumford stack.} So $I_0$ is equal to the closure of $(QM_{rel}^{\bd })^{\mathbb{C}^*_\lambda} \cap QM_{ns}$ inside $QM_{rel}$. So we need to decide which fixed points $(\bla, \widetilde{f})$ can be deformed to a map in $QM_{ns}^{\mathbb{C}^*_\lambda}$. This is equivalent to the weight of smoothing each node (which is the sum of tangent weights of the two $\mathbb{P}^1$'s next to the node) is trivial under the specialization \eqref{eqn:spec_wt}. The tangent weight of the parametrized $\mathbb{P}^1$ at the first node is $-\epsilon $, so all $\mathbb{P}^1$'s in the bubble need to have weight $\epsilon $.
\end{proof}

\section{Structure of $H_{\lambda,w}$}\label{sec:Hpw}
From now on, we will focus on the study of the module $H_\lambda$ defined in \eqref{eqn:Hp}. Denote by $H_\lambda ^{\bd }$ the subspace of it coming from degree $\bd $ quasimaps.
Later in this section, when we want to stress that the evaluation point is $w(x_0)$, we will write $H_{\lambda,w}$ for this module.



\subsection{The Category $\mathcal{O}'$}
\label{subsec:catO}
As mentioned earlier, the Cartan elements may act non-semisimply on $H_\lambda$. However, one can prove that elements in the center of $U(\mathfrak{gl}_n)$ act semisimply (and are thus multiplication by constants). Let $Z(U(\mathfrak{gl}_n))$ be the center.

\begin{lemma}
For any element $z \in Z(U(\mathfrak{gl}_n))$, the action of $z$ on $H_\lambda$ is multiplication by a constant.
\end{lemma}
\begin{proof}
For $H^*_{\sT}(QM_{ns, w(x_0)})$, the action of $z$ is multiplication by a polynomial $m_z(a_1,...,a_n, \epsilon )$.
$m_z$ is symmetric in $a_i$'s and hence does not depend on the choice of $w$. For each $\sT$-fixed component $(\bla, \widetilde{P})$ in $H^*_{\sT}(QM_{rel})$, the action of $z$ only depends on $\bla$, so it's also multiplication by $m_z$.
Thus, after specializing $a_i$ and $\epsilon $, the action of $z$ on $H_\lambda$ is a constant.
\end{proof}

This is opposite to the usual category $\mathcal{O}$ where 
$H_i$'s act semisimply while the center typically acts non-semisimply. Define the category $\mathcal{O}'$ to be 
the category with the above properties.
Namely, a module $M$ is in the category $\mathcal{O}'$ if
\begin{enumerate}[label=(\roman*)]
    \item $M$ is finitely generated
    \item $M$ is locally $\mathfrak{n}$-finite where $\mathfrak{n}$ is the span of $H_i$ and $F_i$ for $i=1,...,n$. 
    \item The center $Z(U(\mathfrak{gl}_n))$ acts on $M$ by constants.
\end{enumerate}
The result in \cite{soergel1986equivalences} shows that 
there is an equivalence of categories 
\[ 
    \Upsilon: \mathcal{O} \xrightarrow[]{\sim} \mathcal{O}'
\]
We need to determine where each module maps to under $\Upsilon$. (Cf. \cite{milicic1997composition} Section 5.)
\begin{proposition}
Let $\lambda $ be a dominant weight. For any $w \in W$,
the functor $\Upsilon$ sends simple (resp. Verma, dual Verma) module  of lowest weight $w \cdot \lambda $ to 
simple (resp. Verma, dual Verma) module  of lowest weight $w ^{-1} \cdot \lambda $
\end{proposition}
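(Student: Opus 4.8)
The plan is to determine $\Upsilon$ first on Verma modules and then to read off the statements for simple and dual Verma modules from the Verma case together with the formal properties of an equivalence. Throughout I would use that, for regular $\lambda$ (the case relevant to this paper, where $\lambda_1 > \cdots > \lambda_n$), both $\mathcal{O}$ and $\mathcal{O}'$ are highest weight categories in the lowest-weight convention, whose standard objects are the Vermas $M(w\cdot\lambda)$, whose costandard objects are the dual Vermas, and whose simples $L(w\cdot\lambda)$ are indexed by the dot-orbit $W\cdot\lambda \cong W$. Since a Verma module is defined by a universal property internal to $U(\mathfrak{gl}_n)$ rather than by the ambient category, $\Upsilon\bigl(M(w\cdot\lambda)\bigr)$ is a well-defined object of $\mathcal{O}'$ and the only task is to identify its lowest (generalized) weight; the same remark applies to the simple and dual Verma modules, so the whole proposition reduces to tracking one discrete invariant through $\Upsilon$.

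Next I would record the properties of $\Upsilon$ from \cite{soergel1986equivalences} that are available here: it is an exact equivalence, it intertwines the contravariant duality of $\mathcal{O}$ with the contravariant duality of $\mathcal{O}'$, and, crucially, it exchanges the semisimple datum with the non-semisimple one. That is, it turns the $\mathfrak{h}$-weight information of an object of $\mathcal{O}$ into $Z(U(\mathfrak{gl}_n))$-information in $\mathcal{O}'$ and conversely, the translation being governed by the Harish-Chandra isomorphism. Because the Harish-Chandra homomorphism is equivariant for the dot action of $W$, this exchange is precisely what forces a Weyl-group element to intervene between the two parametrizations, and the content of the proposition is that this element is the inversion $w \mapsto w^{-1}$.

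For the Verma modules I would transport along functors, following \cite{milicic1997composition}, Section 5. On $\mathcal{O}_\lambda$ there are two commuting families of functors acting on the index set $W$ on opposite sides: the translation/wall-crossing functors, governed by the stabilizer of the central character, which act by $w \mapsto ws$, and the twisting functors, which act by $w \mapsto sw$. The key check is that $\Upsilon$ carries the translation functors on the $\mathcal{O}'$ side to the twisting functors on the $\mathcal{O}$ side (this is exactly the manifestation of the Harish-Chandra duality above). Since every Verma is obtained from the anchor $M(\lambda)$ (which is sent to $M'(\lambda)$) by applying these functors, interchanging a right action with a left action sends the index $w$ to $w^{-1}$, giving $\Upsilon\bigl(M(w\cdot\lambda)\bigr) = M'(w^{-1}\cdot\lambda)$. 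The dual Verma statement then follows by applying the contravariant dualities, which $\Upsilon$ intertwines and which fix the lowest weight of a (dual) Verma; and the simple statement follows because $\Upsilon$ sends simples to simples, while $L(w\cdot\lambda)$ is characterized intrinsically as the head of $M(w\cdot\lambda)$ (equivalently the socle of the dual Verma), a characterization transported through $\Upsilon$ by the Verma case just established.

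The main obstacle I anticipate is the third step: verifying that $\Upsilon$ genuinely interchanges the translation and twisting functors, and hence produces the inversion rather than some other permutation of $W$. Abstract equivalence properties only force the induced permutation to be an automorphism of the Bruhat order, which both the identity and inversion satisfy, so the inversion has to be extracted from the specific construction of Soergel's functor, i.e. from the Harish-Chandra exchange of the roles of $\mathfrak{h}$ and $Z(U(\mathfrak{gl}_n))$. A secondary subtlety, to be dispatched first, is that in $\mathcal{O}'$ the Cartan acts non-semisimply, so "lowest weight" must be read as lowest \emph{generalized} weight; I would check that Vermas and dual Vermas in $\mathcal{O}'$ are nonetheless $\mathfrak{h}$-semisimple along their generating line, so that the weight $w^{-1}\cdot\lambda$ is unambiguous.
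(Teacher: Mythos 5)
Your architecture (establish the Verma case, then deduce simples as heads and dual Vermas by duality) is coherent, but it has a genuine gap at exactly the point where the content of the proposition lives. The whole question is why the permutation of the index set induced by $\Upsilon$ is inversion rather than the identity, and your answer rests entirely on the claim that $\Upsilon$ interchanges translation functors with twisting functors. You never prove this claim: it is asserted parenthetically as ``the manifestation of the Harish-Chandra duality,'' and in your closing paragraph you concede that it cannot follow from abstract properties of an equivalence but must be extracted from the specific construction of Soergel's functor --- which is precisely the verification you do not carry out. Since distinguishing $w^{-1}$ from $w$ is the entire point, reducing the proposition to this unproven intertwining statement (whose proof would require the same Harish-Chandra bimodule analysis the proposition itself encodes) leaves the argument incomplete rather than finished.

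There is also a structural problem with the mechanism as you describe it: wall-crossing functors do not act on the set of Vermas by $w \mapsto ws$. For regular $\lambda$, $\theta_s M(w\cdot\lambda)$ is a nonsplit extension of $M(w\cdot\lambda)$ and $M(ws\cdot\lambda)$, not a Verma, so the step ``every Verma is obtained from the anchor $M(\lambda)$ by applying these functors, and interchanging a right action with a left action sends $w$ to $w^{-1}$'' does not typecheck; one side of the induction would have to use twisting functors (which do permute Vermas, under length conditions), and identifying their image under $\Upsilon$ returns you to the unproven claim. The paper runs the argument in the opposite direction and thereby sidesteps all of this: it imports the simple-module case from \cite{jantzen2013einhullende}, Proposition 6.34 --- this is where the left-right flip of Harish-Chandra bimodules, i.e.\ your ``Harish-Chandra exchange,'' is actually computed and produces the inversion --- and then obtains the Verma (resp.\ dual Verma) case by characterizing these modules as projective covers (resp.\ injective hulls) of simples inside truncated subcategories as in \cite{beilinson1996koszul}, a characterization that transports through any equivalence once one knows where the simples go and that the truncations on the two sides match. (Your secondary worry about generalized weights is harmless: the Verma and dual Verma modules with central character $\chi_\lambda$ are honest weight modules lying in both $\mathcal{O}$ and $\mathcal{O}'$, so the statement is unambiguous.) Your proposal becomes a proof if the functor-intertwining step is replaced either by the citation to Jantzen or by an actual bimodule computation.
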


\begin{proof}
The statement about simple module follows from \cite{jantzen2013einhullende} Proposition 6.34.
The Verma module is the projective cover of the simple module in a truncated category as in \cite{beilinson1996koszul} (and the truncated categories on the two sides match). So the statement about Verma module follows. Same for dual Verma modules.
\end{proof}

\subsection{Tilting module}
\hide{Maybe we change $H_\lambda$ to a curly font.}
We first show that $H_\lambda$ has a filtration by dual Verma modules. This comes from the stratification on the space $\CM_0$.

Take the torus for constructing the Bialynicki-Birula decomposition to be
\[ 
    a_1 \gg a_2 \gg ... \gg a_n \gg \epsilon. 
\]
(This means that e.g. weight $a_1 - a_2$ is attracting.)
Recall that $\CM_0$ is a subset of $\mathbb{C}^*_\lambda$ fixed locus of $QM_{rel}$, so the torus $\sT$ acts on $\CM_0$ and the above torus induces a B-B decomposition.

For a $\sT$-fixed point $(\bla, \widetilde{f})$ in $\CM_0$, use 
$\zeta _{\bla, \widetilde{f}}$ to denote the closure of the attracting locus of $(\bla, \widetilde{f})$. We will also use it to denote its class in $A^*_\sT(\CM_0)$.

The above B-B decomposition can be described as follows: For each 
fixed point $w(x_0)$ in the flag variety, let $X_w$ denote the Schubert cell induced by the torus above. 
The attracting locus of
$(\bla, \widetilde{f})$ can be written as $\zeta _\bla \times N \times \zeta _{\widetilde{f}}$, where 
\begin{itemize}
    \item $\zeta _\bla$ denotes the B-B cell in $QM _{ns}^{\mathbb{C}^*_\lambda}$. This corresponds to deforming the map on the first $\mathbb{P}^1$
    \item $\zeta _{\widetilde{f}}$ denotes the B-B cell in $\widetilde{QM}^{\mathbb{C}^*_\lambda}$ with fixed $ev_0$. This corresponds to deforming the map on the bubbles.
    \item $N$ is a unipotent subgroup of $GL(n)$ such that the action of $N$ on $w(x_0)$ gives an isomorphism from $N$ to $X_w$. This corresponds to moving the first node by multiplying by $N$.
\end{itemize}

Based on this, we can choose a partial ordering on the fixed points as follows: 
We say that $(\bla, \widetilde{f}_1)> (\bmu, \widetilde{f}_2)$ if
\begin{itemize}
    \item $ev_0(\widetilde{f}_1)> ev_0(\widetilde{f}_2)$ in the Schubert cell order (or equivalently, the Bruhat order on $w \in W$.) 
    \item $ev_0(\widetilde{f}_1)= ev_0(\widetilde{f}_2)$, and $\widetilde{f}_1 > \widetilde{f}_2$ in the B-B decomposition on $\widetilde{QM}^{\mathbb{C}^*_\lambda}$
    \item Both of the first two comparisons are equal, and $\bla> \bmu$ in the BB-decomposition of $QM_{ns, ev(\infty )=ev_0(\widetilde{f})}$.
\end{itemize}

Now consider the sequence of subsets
\[ 
  \emptyset = U_0 \subset   U_1 \subset U_2 \subset ... \subset U_m = \CM_0
\]
such that each $U _{i+1} \backslash U_i$ is 
\[ 
    \bigcup _{\bla} \text{attracting set of} (\bla, \widetilde{f})
\]
for a given $\widetilde{f}$ and such that these $\widetilde{f}$ appears in the sequence of the partial order above.
Then each $U_i$ is an open subset of $\CM_0$. This gives a sequence of surjections 
\begin{equation}
    \label{eqn:dual_verma_filt}
    0 \twoheadleftarrow  H^*(U_1) \twoheadleftarrow H^*(U_2) \twoheadleftarrow ... \twoheadleftarrow H^*(U_m) = H^*(\CM_0)
\end{equation}
By construction, $\ker (U _{i+1} \twoheadrightarrow U_i)$ is isomorphic to a dual Verma module for each $i$. 
\hide{To see this more precisely, we need to argue that the inclusion of $QM_{ns} \times N$ is a closed embedding here, 
and the normal bundle is trivial because we are swiping a trivial bundle using $N$.}
This implies a dual Verma filtration by taking $\ker (H^*(U_i)\twoheadleftarrow H^*(U_m)) $ as the $i$-th term.

Let $\varpi$ be the map from $\CM_0$ to a point. 
Poincare duality for orbifolds \cite{adem2007orbifolds} implies that the pairing 
\[ 
    (\alpha , \beta ) \mapsto \varpi_*(\alpha \cup \beta )
\]
is non-degenerate. This pairing is compatible with the $U(\mathfrak{gl}_n)$ action since
\[ 
    (\alpha , F_i \beta ) = (p^* \alpha , q^* \beta ) = (-E_i \alpha , \beta ).
\]
So one can dualize the sequence \eqref{eqn:dual_verma_filt} to get 
\[ 
    0 \hookrightarrow M_1 \hookrightarrow M_2 ... \hookrightarrow M_m = H_\lambda
\]
such that each successive quotient $M_{i+1}/M_i$ is a Verma module.

Now we know that $H_\lambda$ has both Verma and dual Verma filtrations.
Under the categorical equivalence, the same holds for $\Upsilon ^{-1}(H_\lambda)$.
So $\Upsilon ^{-1}(H_\lambda)$ is a tilting module.
In other words, $H_\lambda$ is the image of a tilting module under $\Upsilon$.

\subsection{Multiplicities}
In this section, the point $ev(\infty )$ will become important, so we restore it in our notation. We will write $H_{\lambda, w}$
for the module we get from $QM_{rel, w(x_0)}$. We have shown that each $H_{\lambda, w}$ is the image of a tilting module under $\Upsilon$. 

Every tilting module is a direct sum of indecomposible tilting modules, and the indecomposible ones are parametrized by the lowest weight. 
Let $T(\lambda)$ denote the indecomposible tilting module of lowest weight $\lambda$ and $V(\lambda)$ denote 
the Verma module of lowest weight $\lambda$.

The dimension of degree $\bd $ weight space in $H_{\lambda,w}$
is equal to the number of $\sT$-fixed points in $\CM_0^\bd $. This can be used to determine the multiplicities.
Let $v _{\bd }$ denote the number of $\sT$-fixed points in $QM_{ns}^{\bd }$. (Set $v _\bd =0$ if not all entries of $\bd $ are non-negative.) 
This is equal to the dimension of degree $\bd $ weight space in a Verma module.
For any $u,w \in W$,
let $b _{w, u}$ be the number of paths from $w$ to $u$ in the Bruhat graph. (The arrow points to longer elements in the Bruhat graph.)
Let $\bd (u-w)$ be the vector $(d_1,...,d_{n-1})$ such that 
\[ 
    u(\lambda) - w(\lambda) = \sum_{i = 1}^{n-1}d_i \alpha _i
\]
where $\alpha _i, i=1,...,n-1$ are the simple positive roots.

\begin{lemma}
The dimension of $H^*(\CM_{w(x_0)} ^{\bd })$ is equal to 
\[ 
    \sum_{u \in W} b _{w,u} v _{\bd -\bd (u-w)}
\]
\end{lemma}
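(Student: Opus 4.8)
The plan is to reduce the statement to a count of $\sT$-fixed points and then match that count combinatorially with the right-hand side. As recorded just before the lemma (and following from the equivariant formality in Proposition \ref{prop:eqv_formality}), the dimension of $H^*(\CM_{w(x_0)}^{\bd})$ equals the number of $\sT$-fixed points lying in $\CM_0^{\bd}$, so I would first enumerate these. By Proposition \ref{prop:fp} together with Lemma \ref{lem:fp_in_closure}, a $\sT$-fixed point of $\CM_0^{\bd}$ is a pair $(\bla, \widetilde{f})$ in which $\widetilde f$ is a chain of bubbles, each \emph{genuinely} covering an invariant $\mathbb{P}^1$ of the flag variety, and $\bla$ is a $\sT$-fixed point of $QM_{ns, u(x_0)}$, where $u := ev(\infty')$ is the evaluation at the first node. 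Writing $w = w_{N+1}, w_N, \dots, w_1 = u$ for the evaluations read from the marked point $\infty$ inward, consecutive terms $w_{i+1}, w_i$ are joined by an invariant curve and hence differ by a reflection (a transposition), which is exactly an edge of the Bruhat graph.

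The heart of the argument is to show that the weight condition of Lemma \ref{lem:fp_in_closure} turns such a chain into a Bruhat \emph{path} from $w$ to $u$, bijectively. Given the transposition $t = t_{pq}$ (swapping the values $p<q$, so $\lambda_p > \lambda_q$) relating $w_i$ and $w_{i+1}$, the associated invariant $\mathbb{P}^1$ is unique and carries $\sA$-weight $\pm(a_p - a_q)$; under the specialization \eqref{eqn:spec_wt} this becomes $\pm(\lambda_p - \lambda_q)\epsilon$. Requiring the weight of the bubble (its image weight divided by the covering degree, Remark \ref{rmk:p1weight}) to equal $+\epsilon$ forces the covering degree to be $d_i = \lambda_p - \lambda_q$, a positive integer since $\lambda$ is regular and integral, and it fixes the orientation of the edge. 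I would verify, by a direct computation of the tangent weight of a $d_i$-fold cover at the node, that the sign $+\epsilon$ occurs precisely when the step is length-increasing as we move inward, i.e. $w_i > w_{i+1}$ in Bruhat order; this is already visible for $n=2$, where a bubble is admissible over $ev(\infty)=x_0$ but forbidden over $ev(\infty)=s(x_0)$. Consequently the covering degrees are dictated by the combinatorics, each bubble corresponds to exactly one oriented Bruhat edge, and the chains with $ev(\infty')=u$ are in bijection with the paths counted by $b_{w,u}$.

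Finally I would account for the degree. Since $\bd(\,\cdot\,)$ is linear in the weight difference, the total degree carried by the bubbles telescopes: summing the contribution $\bd(w_i - w_{i+1})$ of each edge over the chain gives $\bd(w_1 - w_{N+1}) = \bd(u-w)$, and one checks that the geometric degree of a $d_i$-fold cover of the $t_{pq}$-curve indeed matches this edge contribution. Hence the parametrized component $\bla$ has degree $\bd_1 = \bd - \bd(u-w)$, and the number of admissible $\bla$ is $v_{\bd - \bd(u-w)}$ (independent of $u$ by the symmetry permuting the equivariant variables, and zero when some entry is negative). Summing the number of $(\widetilde f, \bla)$ pairs over all possible node values $u \in W$ then yields $\sum_{u \in W} b_{w,u}\, v_{\bd - \bd(u-w)}$, as claimed. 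The main obstacle is the sign lemma of the second paragraph: carefully computing the orientation of the bubble weight at the node under the lowest-weight conventions of the paper and confirming that $+\epsilon$ selects exactly the Bruhat-increasing direction. The degree telescoping and the resulting bijection are then routine bookkeeping.
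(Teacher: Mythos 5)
Your proposal is correct and follows essentially the same route as the paper's own (much terser) proof: reduce to counting $\sT$-fixed points, partition them by the evaluation $u(x_0)$ at the first node, identify bubble chains with Bruhat paths counted by $b_{w,u}$, and count the parametrized component by $v_{\bd-\bd(u-w)}$. The extra detail you supply---the covering degrees $\lambda_p-\lambda_q$ forced by the weight-$\epsilon$ condition, the orientation of Bruhat edges, and the degree telescoping---is exactly what the paper leaves implicit.
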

\begin{proof}
Given a fixed point $f$, suppose that the evaluation of the first node is $u(x_0)$, then the possible maps on bubbles is in bijection with Bruhat paths from $u$ to $w$, 
and the bubbles occupies degree $\bd (u-w)$.
So the number of possible maps on the parametrized $\mathbb{P}^1$ is $ v _{\bd -\bd (u-w)}$, thus the conclusion.
\end{proof}

\begin{corollary}
The multiplicity of $V(u(\lambda)-\rho )$ in $\Upsilon ^{-1}(H_{\lambda, w})$ is equal to $b_{w,u ^{-1}}$ for $w \preceq u ^{-1}$ in the Bruhat order. Verma modules of other lowest weights do not appear in $H_{\lambda, w}$.        
\end{corollary}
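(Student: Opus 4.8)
The plan is to compute the multiplicities inside $\mathcal{O}'$ directly from the geometry and then transport the answer to $\mathcal{O}$ through $\Upsilon$. Since $\Upsilon$ is an equivalence of abelian categories and, by the Proposition of Section \ref{subsec:catO}, carries Verma modules to Verma modules, it carries Verma filtrations to Verma filtrations; hence for any lowest weight $\nu$,
\[
    \big(\Upsilon^{-1}(H_{\lambda,w}) : V(\nu)\big) = \big(H_{\lambda,w} : \Upsilon(V(\nu))\big),
\]
where the right-hand side is a Verma-filtration multiplicity computed in $\mathcal{O}'$. So it suffices to determine, for each $u \in W$, how often the $\mathcal{O}'$-Verma of lowest weight $u(\lambda)-\rho$ occurs in $H_{\lambda,w}$.

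First I would read this multiplicity off the dual Verma filtration \eqref{eqn:dual_verma_filt}, which is indexed by the bubble configurations $\widetilde{f}$. For a fixed $\widetilde{f}$ whose first node evaluates to $u(x_0)$, the associated piece is the dual Verma coming from the $QM_{ns,u(x_0)}$ factor, and at its bottom ($\bd_1 = 0$) the operator $H_i = c_1(\mathcal{F}_i)-c_1(\mathcal{F}_{i-1})+i\epsilon$ acts, under \eqref{eqn:spec_wt}, by $(\lambda_{u(i)}+i)\epsilon$, so its lowest weight is $u(\lambda)-\rho$. By Lemma \ref{lem:fp_in_closure} each such $\widetilde{f}$ is determined by a Bruhat path (the bubble degrees are pinned by the weight-$\epsilon$ condition), so the number of these pieces is exactly $b_{w,u}$. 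This agrees with the preceding Lemma: its graded-dimension formula $\dim H_\lambda^{\bd} = \sum_u b_{w,u}\, v_{\bd-\bd(u-w)}$ is precisely the sum of the graded dimensions of $b_{w,u}$ copies of a Verma of lowest weight $u(\lambda)-\rho$. Because $\lambda$ is regular, the shifts $\bd(u-w)$ are pairwise distinct, so these Verma graded dimensions are linearly independent and the expansion is unique; this both confirms the count and shows no other lowest weights occur. By the self-duality used to pass from \eqref{eqn:dual_verma_filt} to the Verma filtration, the same multiplicity $b_{w,u}$ governs the Verma filtration of $H_{\lambda,w}$.

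Finally I would apply the Proposition of Section \ref{subsec:catO}: $\Upsilon$ sends the $\mathcal{O}$-Verma of lowest weight $v(\lambda)-\rho$ to the $\mathcal{O}'$-Verma of lowest weight $v^{-1}(\lambda)-\rho$. Taking $v=u$ gives $\Upsilon(V(u(\lambda)-\rho))$ equal to the $\mathcal{O}'$-Verma of lowest weight $u^{-1}(\lambda)-\rho$, whose multiplicity in $H_{\lambda,w}$ is $b_{w,u^{-1}}$. Substituting into the displayed identity yields $\big(\Upsilon^{-1}(H_{\lambda,w}) : V(u(\lambda)-\rho)\big) = b_{w,u^{-1}}$, and this is nonzero precisely when a directed Bruhat path from $w$ to $u^{-1}$ exists, i.e. when $w \preceq u^{-1}$; the remaining lowest weights do not appear.

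The step I expect to be the main obstacle is the careful bookkeeping of the inversion $u \mapsto u^{-1}$ across the two categories: the geometry naturally attaches the count $b_{w,u}$ to the lowest weight $u(\lambda)-\rho$ in $\mathcal{O}'$, and it is only the Weyl-group inversion built into $\Upsilon$ that ``crosses'' the two indices, producing the multiplicity $b_{w,u^{-1}}$ for $V(u(\lambda)-\rho)$ in $\mathcal{O}$. A secondary point needing care is that, because the Cartan elements act non-semisimply in $\mathcal{O}'$, one should work with the grading by quasimap degree (equivalently the root-lattice grading) rather than honest weight spaces when invoking linear independence of Verma characters; this is exactly the grading in which the preceding Lemma is phrased.
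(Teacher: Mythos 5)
Your proposal is correct and follows the same route the paper intends: the corollary is stated there without a separate proof, as an immediate consequence of the preceding Lemma's dimension count $\sum_u b_{w,u}v_{\bd-\bd(u-w)}$ (equivalently, of the dual-Verma/Verma filtration indexed by bubble configurations), combined with the Proposition of Section \ref{subsec:catO} saying $\Upsilon$ inverts the Weyl-group parameter on Verma modules. Your additional care about linear independence of Verma graded dimensions for regular $\lambda$, and about using the quasimap-degree grading rather than honest weight spaces, just makes explicit what the paper leaves implicit.
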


Let $p _{u,w}$ be the Kazhdan-Lusztig polynomial $P _{u,w}$ evaluated at 1. The
multiplicities of Verma modules in tilting modules, see e.g. \cite{humphreys2021representations}, can be expressed as
\[ 
   (T(y(\lambda)-\rho ): M(u(\lambda)-\rho ))   = p _{uw_\circ , y w_\circ }
\]
where $w_\circ$ is the longest element in $W$.
So we have 
\begin{corollary}
\[ 
    \Upsilon ^{-1}(H_{\lambda, w}) = \bigoplus _{y \in W} T(y(\lambda)-\rho )^{\oplus n _{w,y}}
\]
where $n _{w,y}$ is determined by the relation
\[ 
    \sum_{y}n _{w,y} p _{u w _{\circ } , y w _{\circ }} = b _{w, u ^{-1}}
\]
Equivalently,
\[ 
    n _{w,y} = \sum_{u} b _{w, u ^{-1}}p _{u,y} (-1)^{l(u)-l(y)}
\]
\end{corollary}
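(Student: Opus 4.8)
The plan is to derive the second displayed formula for $n_{w,y}$ from the first by a Kazhdan–Lusztig inversion argument, treating the first relation $\sum_y n_{w,y}\, p_{uw_\circ, yw_\circ} = b_{w,u^{-1}}$ as a linear system in the unknowns $n_{w,y}$ and inverting the matrix $(p_{uw_\circ, yw_\circ})$. The previous corollary already supplies the multiplicity data: $\Upsilon^{-1}(H_{\lambda,w})$ is a tilting module whose Verma multiplicities are $(\Upsilon^{-1}(H_{\lambda,w}) : M(u(\lambda)-\rho)) = b_{w,u^{-1}}$, and each indecomposable summand $T(y(\lambda)-\rho)$ contributes $(T(y(\lambda)-\rho) : M(u(\lambda)-\rho)) = p_{uw_\circ, yw_\circ}$ copies of the relevant Verma. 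Decomposing $\Upsilon^{-1}(H_{\lambda,w}) = \bigoplus_y T(y(\lambda)-\rho)^{\oplus n_{w,y}}$ and comparing Verma multiplicities on both sides immediately gives the defining relation for $n_{w,y}$, so the content is entirely in solving for $n_{w,y}$ explicitly.

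First I would set up the inversion. The key input is the classical inversion formula for Kazhdan–Lusztig polynomials: $\sum_{z} (-1)^{\ell(x)-\ell(z)} P_{x,z}\, P_{z,y} = \delta_{x,y}$ (evaluated at $q=1$, this reads $\sum_z (-1)^{\ell(x)-\ell(z)} p_{x,z}\, p_{z,y} = \delta_{x,y}$). Equivalently, in terms of the inverse Kazhdan–Lusztig polynomials $Q_{x,y}$ one has $p_{x,y}^{-1} = (-1)^{\ell(x)-\ell(y)} p_{w_\circ y,\, w_\circ x}$, which is exactly the form that will make the $w_\circ$-twists in the first relation cancel. The strategy is then to multiply the first relation by the appropriate inverse KL entry and sum over $u$, using this orthogonality to isolate $n_{w,y}$.

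Concretely I would argue as follows. Starting from $\sum_{y} n_{w,y}\, p_{uw_\circ, yw_\circ} = b_{w,u^{-1}}$, apply the substitution $u \mapsto$ (appropriate reindexing by $w_\circ$) and multiply by $(-1)^{\ell(u)-\ell(y)} p_{u,y}$, then sum over $u$. Using the inversion identity to collapse the double sum, the left side telescopes to $n_{w,y}$ and the right side becomes $\sum_u b_{w,u^{-1}}\, p_{u,y}\, (-1)^{\ell(u)-\ell(y)}$, which is precisely the claimed closed form. The bookkeeping of the $w_\circ$-conjugations — checking that the symmetry $p_{x,z} = p_{w_\circ z, w_\circ x}$ and the length identity $\ell(w_\circ x) = \ell(w_\circ) - \ell(x)$ convert the matrix $(p_{uw_\circ, yw_\circ})$ into the standard triangular KL matrix — is the step that requires care.

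\textbf{The main obstacle} will be getting the indexing through the $w_\circ$-twist exactly right so that the inversion formula applies verbatim rather than in some transposed or conjugated variant; a sign error or a swapped pair of indices here would propagate into the final formula. I expect this to be purely combinatorial once the correct form of the KL inversion identity is pinned down, and everything else — the passage from the decomposition statement to the linear relation, and the recognition of the solved expression as the stated $n_{w,y}$ — is formal and follows directly from the previous corollary together with the standard expression $(T(y(\lambda)-\rho):M(u(\lambda)-\rho)) = p_{uw_\circ, yw_\circ}$ cited from \cite{humphreys2021representations}.
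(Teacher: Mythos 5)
Your first paragraph is exactly the paper's (implicit) argument: $\Upsilon^{-1}(H_{\lambda,w})$ is tilting, hence by Krull--Schmidt a direct sum of various $T(y(\lambda)-\rho)$; comparing the Verma multiplicities $b_{w,u^{-1}}$ from the previous corollary with Soergel's formula $(T(y(\lambda)-\rho):M(u(\lambda)-\rho))=p_{uw_\circ,yw_\circ}$ gives the linear system; the only content left is inverting the unitriangular matrix $(p_{uw_\circ,yw_\circ})_{u,y}$. The gap is in how you carry out that inversion: two of the combinatorial identities you invoke are false. The ``classical inversion formula'' you state, $\sum_z(-1)^{\ell(x)-\ell(z)}P_{x,z}P_{z,y}=\delta_{x,y}$, is not the Kazhdan--Lusztig inversion theorem; the correct statement carries a $w_\circ$-twist on one factor, $\sum_z(-1)^{\ell(x)-\ell(z)}P_{x,z}\,P_{w_\circ y,w_\circ z}=\delta_{x,y}$. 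By uniqueness of the inverse of a unitriangular matrix, your untwisted version is equivalent to the entrywise symmetry $P_{x,y}=P_{w_\circ y,w_\circ x}$ --- which is precisely the ``symmetry $p_{x,z}=p_{w_\circ z,w_\circ x}$'' you invoke in your bookkeeping step, and it is false. Already in $S_4$: $P_{1234,4231}=1+q$ (the Schubert variety $X_{4231}$ is singular along $X_{2143}$, and $P_{x,4231}=1+q$ for all $x\leq 2143$), whereas $P_{w_\circ\cdot 4231,\,w_\circ\cdot 1234}=P_{1324,4321}=1$, since $X_{w_\circ}$ is the full flag variety and is smooth. So these two statements are not ``equivalent'' to the one correct fact you also record (that the inverse matrix of $(p_{x,y})$ has entries $(-1)^{\ell(x)-\ell(y)}p_{w_\circ y,w_\circ x}$); a derivation resting on them is not a proof, even though the target formula happens to be true.

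The repair is short and uses the two symmetries that actually hold: $P_{x,y}=P_{x^{-1},y^{-1}}$ and $P_{x,y}=P_{w_\circ xw_\circ,\,w_\circ yw_\circ}$ (conjugation by $w_\circ$ is an automorphism of the Coxeter system, since $-w_\circ$ permutes the simple roots). Set $T_{u,y}:=p_{uw_\circ,yw_\circ}$; this is the KL matrix with both indices relabelled by the bijection $u\mapsto uw_\circ$, so the genuine inversion theorem gives
\[
(T^{-1})_{y,u}=(-1)^{\ell(uw_\circ)-\ell(yw_\circ)}\,p_{w_\circ(uw_\circ),\,w_\circ(yw_\circ)}
=(-1)^{\ell(u)-\ell(y)}\,p_{w_\circ uw_\circ,\,w_\circ yw_\circ}
=(-1)^{\ell(u)-\ell(y)}\,p_{u,y},
\]
where the middle equality uses $\ell(xw_\circ)=\ell(w_\circ)-\ell(x)$ and the last uses conjugation invariance. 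Applying $T^{-1}$ to $\sum_y n_{w,y}T_{u,y}=b_{w,u^{-1}}$ yields $n_{w,y}=\sum_u(-1)^{\ell(u)-\ell(y)}p_{u,y}\,b_{w,u^{-1}}$, as claimed; alternatively one can avoid conjugation altogether by substituting $v=u^{-1}$ and using $P_{x,y}=P_{x^{-1},y^{-1}}$. In short: keep the reduction to the linear system verbatim, but replace your inversion step by the computation above --- the place you yourself flagged as ``the main obstacle'' is exactly where the argument, as written, breaks.
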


\todo{Singular vectors can be identified with the $\widetilde{QM}$ part by counting dimensions}

\todo{The tangent line bundle $\psi $ is an endomorphism of $H_{\lambda,w}$. $\psi -\epsilon $ is nilpotent.}

\todo{SL2 example}

\end{document}